\providecommand{\U}[1]{\protect \rule{.1in}{.1in}}
\numberwithin{equation}{section}
\newtheorem{Theorem}{Theorem}[section]
\newtheorem{corollary}[Theorem]{Corollary}
\newtheorem{lemma}[Theorem]{Lemma}
\newtheorem{remark}[Theorem]{Remark}
\newenvironment{proof}[1][Proof]{\noindent \textbf{#1.} }{\  \rule{0.5em}{0.5em}}
\begin{document}
\title{A robust stochastic control problem with applications to monotone mean-variance problems}
\author{Yuyang Chen \thanks{School of Mathematical Sciences, Shanghai Jiao Tong University, China (cyy0032@sjtu.edu.cn)}
\and
Tianjiao Hua \thanks{School of Mathematical Sciences, Shanghai Jiao Tong University, China (htj960127@sjtu.edu.cn)}
\and
Peng Luo \thanks{School of Mathematical Sciences, Shanghai Jiao Tong University, China (peng.luo@sjtu.edu.cn)}}
\maketitle
\begin{abstract}
This paper studies a robust stochastic control problem with a monotone mean-variance cost functional and random coefficients. The main technique is to find the saddle point through two backward stochastic differential equations (BSDEs) with unbounded coefficients. We further show that the robust stochastic control problem shares the same optimal control and optimal value with the stochastic control problem with a mean-variance cost functional. The results obtained are then applied to monotone mean-variance and mean-variance portfolio selection problems and monotone mean-variance and mean-variance investment-reinsurance problems.
\end{abstract}
\textbf{Key words}: robust stochastic control, monotone mean-variance, mean-variance, random coefficients, backward stochastic differential equations.\\
\textbf{MSC-classification}: 91B16, 93E20, 60H30, 91G1.
\section{Introduction}

The mean-variance (MV) preferences, initially introduced by Markowitz \cite{markowits1952portfolio}, stand as one of the most widely used objectives for portfolio selection problems. Many studies have utilized this criterion to investigate the portfolio selection problems, see e.g., \cite{zhou2000continuous,zhou2003markowitz,hu2005constrained} in continuous-time settings and \cite{leippold2004geometric,liang2008optioned,zhu2004risk} in discrete-time settings. However, a major drawback of MV preferences is their lack of monotonicity. That means a strictly wealthier investor may possess lower utility compared to others,  contradicting the fundamental assumption of economic rationality. To address this significant limitation, Maccheroni et al. \cite{maccheroni2009portfolio} introduced  monotone mean-variance (MMV) preferences and tackled the MMV model in a single-period setting. They showed that the MMV preference constitutes the smallest monotone preference functional coinciding with mean–variance on their domain of monotonicity. This provided an approach to show the optimal strategies to MV problems and MMV problems coincide by proving the optimal strategies to both MV and MMV preferences lead to a terminal wealth within the domain of monotonicity of the classical mean–variance functional. Within this framework, Strub and Li \cite{strub2020note} illustrated the optimal strategies and values of MV and MMV are consistent when asset prices are continuous. Further, in a stochastic factor model, Li, Liang and Pang \cite{li2022comparison} demonstrated the equivalence of MV and MMV under certain conditions, including situations involving jumps. Recently, Du and Strub \cite{du2023monotone} took the trading constraints into account and investigated the equivalence of MV and MMV portfolio selection problems when asset prices are continuous under general trading constraints.

On the other hand, to the best of our knowledge, Trybu{\l}a and Zawisza \cite{trybula2019continuous} first considered a continuous-time stochastic factor model, obtained the optimal strategy and value function for both the MV  and MMV optimization problems in an explicit
form, and demonstrated their equivalence through direct comparison. Following the same direction as \cite{trybula2019continuous}, Shen and Zou \cite{shen2022cone} considered a deterministic coefficients model with cone trading constraints and solved the MMV and the MV problems successively by means of the HJBI equation approach and verified that the optimal strategies to both problems coincide. Then, Hu, Shi and Xu \cite{hu2023constrained} generalized \cite{shen2022cone} to a diffusion model with random appreciation rate, volatility and deterministic interest rate. Recently, Shi and Xu \cite{xu2024} reach the same conclusion for the MMV and MV investment-reinsurance problems. In the conclusion of \cite{hu2023constrained}, they pointed out that how to solve the MMV problem when the interest rate is random even without any constraints constitutes an interesting question for future research.

In this paper, we consider a robust control problem with monotone mean-variance cost functional and the following state dynamics
$$
dX_{t} = (A_{t}X_{t}+u^{\prime}_{t}B_{t})dt+(X_{t}C_{t}^{\prime}+u^{\prime}_{t}D_{t})dW_{t},~~~X_0=x\in\mathbb{R},
$$ 
where all coefficients are random and the coefficient $A$ is even unbounded. The main technique is to find the saddle point through two backward stochastic differential equations (BSDEs) with unbounded coefficients. It is worth noting that the coefficients in our setting are all random and one of them is even unbounded, which makes our BSDEs significantly different from those in \cite{hu2023constrained} and cannot be solved by existing results on BSDEs. Relying on BMO martingale techniques, we establish some a priori estimates and successfully obtain the existence and uniqueness results of these two BSDEs. We further study the corresponding stochastic control problem with mean-variance cost functional and achieve that both the robust stochastic control problem and the stochastic control problem with mean-variance cost functional share the same optimal control and optimal value. Finally we apply our results to MV an MMV portfolio selection problems. We obtain optimal strategies for both MV and MMV problems with random coefficients in a complete market and show by direct comparison that they coincide. To the best of our knowledge, this is the first result of obtaining the explicit solution of the MMV problems with random (and unbounded) interest rate, which partially answer the question proposed in \cite{hu2023constrained} for random (and unbounded) interest rate and complete market.  Upon this basis, we further investigate MMV and MV investment-reinsurance problems and obtain the same conclusion. 

The reminder of this paper is organized as follows. In section \ref{section 2}, we formulate the robust control problems with random coefficients. In section \ref{section 3}, we establish the existence and uniqueness of solutions to two BSDEs with unbounded coefficients and obtain the optimal pair of the robust control problem. In section \ref{section 4}, we solve the control problem with mean-variance cost functional and make a comparison with the robust control problem. Applications to MMV and MV portfolio selection problems and MMV and MV investment-reinsurance problems are presented in section \ref{section 5}.
\section{Formulation of the problem}\label{section 2}

Let $(\Omega,\mathcal{F},\mathbb{P})$ be a complete probability space on which a standard $n$-dimensional Brownian motion $W=\{W_{t}\}_{t\geqslant 0}$ is defined and denote by $\mathbb{F}=\left\{\mathcal{F}_t\right\}_{t \geqslant 0}$ the usual augmentation of the natural filtration generated by $W$, by $\mathbb{E}[\cdot]$ the expectation under $\mathbb{P}$ and by $\mathbb{E}_t[\cdot]$ the conditional expectation under $\mathbb{P}$ given that $\mathcal{F}_t$.

Let $\mathbb{R}^{n}$ be the set of $n$-dimensional column vectors, let $\mathbb{R}_{+}^{n}$ be the set of $n$-dimensional column vectors whose components are nonnegative and let $\mathbb{R}^{m\times n}$ be the set of $m \times n$ real matrices. We denote the transpose of matrix $M$ by $M^{\prime}$, and the norm by $|M|=\sqrt{\operatorname{trace}\left(M M^{\prime}\right)}$. Let $\mathbb{S}^{n}$ be the set of symmetric
$n \times n$ real matrices. We write $M>$ (resp., $\geqslant) 0$ for any positive definite (resp., positive semidefinite) matrix $M \in \mathbb{S}^{n}$. We write $A>(\geqslant) B$ if $A, B \in \mathbb{S}^{n}$ and $A-B>(\geqslant) 0$. Unless otherwise stated, all equalities and inequalities between random variables and processes will be understood in the $\mathbb{P}$-a.s. and $\mathbb{P} \otimes d t$-a.e. sense, respectively.

We introduce the following spaces of random processes: for Euclidean space $\mathbb{H}=\mathbb{R}^{n},\mathbb{R}^{n\times n}$ and all $\mathbb{F}$-stopping time $\tau\leqslant T$ and under $\mathbb{P}$:
\begin{align*}
&L_{\mathbb{F}}^0(0, T ; \mathbb{H})=\left\{\varphi:[0, T] \times \Omega \rightarrow \mathbb{R} \mid \varphi \text { is an }\left\{\mathcal{F}_t\right\}_{t \geqslant 0} \text {-adapted process }\right\};\\
& L_{\mathbb{F},\mathbb{P}}^2(0,T;\mathbb{H})=\bigg\{\varphi \in L^{0}_{\mathbb{F}}(0,T;\mathbb{H})\mid\mathbb{E}\left[\int_0^T|\varphi(t)|^2dt\right]<\infty\bigg\};\\
&L_{\mathbb{F},\mathbb{P}}^2(\Omega;C([0,T];\mathbb{H}))=\bigg\{\varphi \in L^{0}_{\mathbb{F}}(0,T;\mathbb{H})\mid\mathbb{E}\left[\sup_{t\in[0, T]}|\varphi(t)|^2\right]<\infty\bigg\};\\
&L_{\mathbb{F},\mathbb{P}}^2\left(\Omega;L^1(0,T;\mathbb{H})\right)=\bigg\{\varphi \in L^{0}_{\mathbb{F}}(0,T;\mathbb{H})\mid\mathbb{E}\left[\left(\int_0^T|\varphi(t)|dt\right)^2\right]<\infty\bigg\};\\
&L_{\mathbb{F},\mathbb{P}}^{\infty}(0,T;\mathbb{H})=\bigg\{\varphi \in L^{0}_{\mathbb{F}}(0,T;\mathbb{H})\mid\varphi(\cdot)\text{ is }\text{essentially bounded}\bigg\};\\
&L_{\mathbb{F},\mathbb{P}}^{2,\mathrm{BMO}}(0,T;\mathbb{H})=\bigg\{\varphi \in L^{0}_{\mathbb{F}}(0,T;\mathbb{H})\mid\|\varphi\|_{\mathrm{BMO}_{2}}:=
\sup_{0\leqslant\tau\leqslant T}\left\|\mathbb{E}\left[\int_{\tau}^{T}|\varphi(s)|^{2}ds\mid\mathcal{F}_{\tau}\right]\right\|^{\frac{1}{2}}_{\infty}<\infty\bigg\}.
\end{align*}
We now introduce the following scalar-valued linear stochastic differential equation (SDE):
\begin{equation}\label{SDE}
    dX_{t} = (A_{t}X_{t}+u^{\prime}_{t}B_{t})dt+(X_{t}C_{t}^{\prime}+u^{\prime}_{t}D_{t})dW_{t},~~~~X_0=x\in\mathbb{R}.
\end{equation}
The class of admissible controls is defined as the set
\begin{align*}
\Pi:=L_{\mathbb{F},\mathbb{P}}^{2}\left(0,T;\mathbb{R}^{n}\right).
\end{align*}
And we will denote by $X^{u}$ the state process \eqref{SDE} whenever it is necessary to indicate its dependence on $u\in\Pi$. \\
For the coefficients, we assume the following conditions.\\
\textbf{Assumption (A)} $A\in L_{\mathbb{F},\mathbb{P}}^{2,\mathrm{BMO}}(0,T;\mathbb{R})$, $B,C\in L_{\mathbb{F},\mathbb{P}}^{\infty}(0,T;\mathbb{R}^{n})$, $D\in L_{\mathbb{F},\mathbb{P}}^{\infty}(0,T;\mathbb{R}^{n\times n})$. Moreover, there exists a positive constant $\delta$, such that
\begin{equation*}
    D_{t}D_{t}^{\prime} \geqslant \delta I_{n\times n}.
\end{equation*}
Here $I_{n\times n}$ is the $(n\times n)$-identity matrix.\\
For any process $\eta \in L_{\mathbb{F}}^{0}\left(0, T ; \mathbb{R}^{n}\right)$ such that
\begin{equation}\label{Lambda1}
\Lambda_{t}^{\eta}:=\mathcal{E}\left(\int_{0}^{t} \eta^{\prime} d W\right)
\end{equation}
is a martingale, define $\mathbb{P}^{\eta}$ by
\begin{align*}
\left.\frac{d\mathbb{P}^{\eta}}{d\mathbb{P}}\right|_{\mathcal{F}_{t}}=\Lambda_{t}^{\eta}.
\end{align*}
By Girsanov's theorem,
\begin{equation}\label{Girsanov}
W_{t}^{\eta}:=W_{t}-\int_{0}^{t}\eta_{s}ds
\end{equation}
is a Brownian motion under $\mathbb{P}^{\eta}$. Denote
\begin{align*}
\mathcal{A}:=\left\{\eta \in L_{\mathbb{F}}^{0}\left(0,T;\mathbb{R}^{n}\right) \mid \mathbb{E}\left[\Lambda_{t}^{\eta}\right]=1 \text { and } \mathbb{E}\left[\left(\Lambda_{t}^{\eta}\right)^{2}\right]<\infty \text { for all }t\in[0,T]\right\}.
\end{align*}
Let us now state our robust control problem as follows:
\begin{equation}\label{MMV1}
\sup_{u\in\Pi}\inf_{\eta\in\mathcal{A}} \mathbb{E}^{\mathbb{P}^{\eta}}\left[X_{T}^{u}+\frac{1}{2\theta}\left(\Lambda_{T}^{\eta}-1\right)\right],
\end{equation}
where $\theta$ is a given positive constant. The robust control problem is well-defined, according to the following lemma which give the solvability of state equation \eqref{SDE}.
\begin{lemma}\label{state space theorem}
    Under assumption (A), for any $u\in L_{\mathbb{F},\mathbb{P}}^2(0,T;\mathbb{R}^{n})$, the equation \eqref{SDE} admits a unique solution $X\in L_{\mathbb{F},\mathbb{P}}^2(\Omega;C([0,T];\mathbb{R}))$.
    \end{lemma}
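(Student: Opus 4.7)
The plan is to solve \eqref{SDE} explicitly via variation of constants. Introduce the fundamental solution
$$\Phi_t := \exp\Bigl(\int_0^t\!\bigl(A_s-\tfrac{1}{2}|C_s|^2\bigr)\,ds + \int_0^t C_s'\,dW_s\Bigr),$$
which solves $d\Phi_t = A_t\Phi_t\,dt + \Phi_tC_t'\,dW_t$ with $\Phi_0=1$ and, by It\^o's formula, $d\Phi_t^{-1} = \Phi_t^{-1}[(-A_t+|C_t|^2)\,dt - C_t'\,dW_t]$. Setting $Z_t := \Phi_t^{-1}X_t$ and applying the product rule forces $Z$ to satisfy
$$dZ_t = \Phi_t^{-1}u_t'(B_t-D_tC_t)\,dt + \Phi_t^{-1}u_t'D_t\,dW_t,\qquad Z_0=x,$$
so that every solution admits the explicit representation
$$X_t = \Phi_t\Bigl[x + \int_0^t\Phi_s^{-1}u_s'(B_s-D_sC_s)\,ds + \int_0^t\Phi_s^{-1}u_s'D_s\,dW_s\Bigr]. \qquad(\ast)$$
Uniqueness follows directly from $(\ast)$, and existence reduces to verifying membership of the right-hand side in $L^2_{\mathbb{F},\mathbb{P}}(\Omega;C([0,T];\mathbb{R}))$.

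The first technical ingredient is moment control of $\Phi^{\pm 1}$. Since $C\in L^\infty$, the stochastic integrals $\pm\int_0^\cdot C'\,dW$ are BMO martingales, and hence $\mathcal{E}(\pm\int_0^\cdot C'\,dW) \in L^p(\mathbb{P})$ for every $p<\infty$. Since $A$ is $\mathrm{BMO}_2$, the John--Nirenberg inequality gives $\mathbb{E}[\exp(\lambda\int_0^T A_s^2\,ds)]<\infty$ for some $\lambda>0$; combined with the elementary inequality $p\int_0^T|A_s|\,ds \leq \tfrac{\lambda}{2}\int_0^T A_s^2\,ds + \tfrac{p^2T}{2\lambda}$, this upgrades to $\mathbb{E}[\exp(p\int_0^T|A_s|\,ds)]<\infty$ for every $p$. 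Doob's maximal inequality applied to the continuous processes $\Phi$ and $\Phi^{-1}$ then yields $\sup_{t\leq T}\Phi_t^{\pm 1} \in L^p(\mathbb{P})$ for every $p<\infty$.

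The main obstacle is establishing $\mathbb{E}[\sup_{t\leq T}X_t^2]<\infty$ under only $u\in L^2_{\mathbb{F},\mathbb{P}}$; a naive BDG/H\"older attack on $(\ast)$ reduces the task to bounding $\mathbb{E}[\int_0^T\Phi_s^{-2}|u_s|^2\,ds]$, which would ostensibly require $u$ in some $L^{2+\varepsilon}$. The workaround I would pursue is to apply It\^o's formula to $\phi_tX_t^2$ using the pathwise decreasing weight $\phi_t := \exp(-\int_0^t(2|A_s|+K)\,ds)$, with $K$ chosen --- via Young's inequality --- large enough to dominate $|C_s|^2$ together with the $X_tu_t$ cross term. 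After localizing with $\tau_n := \inf\{t:|X_t|\geq n\}\wedge T$ to turn the stochastic integral into a true martingale and taking expectations, this should yield the weighted a priori bound
$$\mathbb{E}\Bigl[\sup_{t\leq\tau_n}\phi_tX_t^2\Bigr] \leq C\Bigl(x^2 + \mathbb{E}\!\int_0^T|u_s|^2\,ds\Bigr)$$
uniformly in $n$. The unweighted bound is then recovered via $\sup_t X_t^2 \leq \phi_T^{-1}\sup_t\phi_tX_t^2$ together with H\"older's inequality, exploiting $\phi_T^{-1} \in \bigcap_{p<\infty}L^p(\mathbb{P})$ from the previous paragraph, and letting $n\to\infty$ by Fatou. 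Quantitatively distributing the integrability between $\phi_T^{-1}$ and $\sup_t\phi_tX_t^2$ in this weight-removal step is the delicate point, and is precisely where the $\mathrm{BMO}_2$ hypothesis on $A$ is used decisively; uniqueness then falls out of $(\ast)$ applied to the difference of any two solutions.
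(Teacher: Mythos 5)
Your variation-of-constants representation $(\ast)$ is correct, and the weighted energy estimate $\mathbb{E}\bigl[\sup_{t\leqslant\tau_n}\phi_tX_t^2\bigr]\leqslant C\bigl(x^2+\mathbb{E}\int_0^T|u_s|^2ds\bigr)$ with $\phi_t=\exp\bigl(-\int_0^t(2|A_s|+K)ds\bigr)$ is attainable by the argument you sketch. But the step you yourself flag as ``delicate'' is a genuine gap, not a technicality: after Fatou you only know $\sup_t\phi_tX_t^2\in L^1(\mathbb{P})$, and $\phi_T^{-1}\in\bigcap_{p<\infty}L^p(\mathbb{P})$. H\"older applied to the product $\phi_T^{-1}\cdot\sup_t\phi_tX_t^2$ requires the second factor in $L^q$ for some $q>1$, which your a priori bound does not provide; and there is no way to ``distribute'' integrability here, since the product of an $L^1$ variable with a variable in all $L^p$, $p<\infty$, need not be integrable. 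Upgrading the weighted bound to $L^q$, $q>1$, would in turn require $u\in L^{2q}$, i.e.\ exactly the extra integrability of $u$ you were trying to avoid. So as written the proof does not close.

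The paper takes a different route that sidesteps the weight entirely: a contraction mapping on $L^2_{\mathbb{F},\mathbb{P}}(\Omega;C([0,T];\mathbb{R}))$ over small time intervals, freezing $X$ in the $A_tX_t$ and $X_tC_t'$ terms. The unbounded drift is handled by the Delbaen--Tang energy inequality for BMO processes,
\begin{equation*}
\mathbb{E}\Bigl[\Bigl(\int_0^{\varepsilon}|A_t|\,f_t\,dt\Bigr)^2\Bigr]\leqslant 8\,\|A\|^2_{\mathrm{BMO}_2}\,\mathbb{E}\Bigl[\int_0^{\varepsilon}f_t^2\,dt\Bigr],
\end{equation*}
which gives a direct $L^2\to L^2$ bound on $f\mapsto\int_0^{\cdot}|A_t|f_t\,dt$ with a constant proportional to $\varepsilon$; this is precisely what makes the hypothesis $u\in L^2$ (rather than $L^{2+\varepsilon}$) and $A\in L^{2,\mathrm{BMO}}$ sufficient. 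If you want to salvage your explicit-formula approach, the missing ingredient is an estimate of this BMO type applied directly to $\mathbb{E}\bigl[\sup_t|\Phi_t\int_0^t\Phi_s^{-1}u_s'D_s\,dW_s|^2\bigr]$, exploiting that the conditional moments of $\Phi_t\Phi_s^{-1}$ are bounded uniformly over stopping times by John--Nirenberg; a pathwise bound by $\phi_T^{-1}$ followed by H\"older cannot work.
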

    \begin{proof}
    First, given $\bar{X}\in L_{\mathbb{F},\mathbb{P}}^2(\Omega;C([0,T];\mathbb{R}))$ and $\bar{X}_{0}=x$, we consider
    \begin{equation}\label{SDE2}
    \left\{\begin{array}{l}
    dX_{t}=\left(A_{t}\bar{X}_{t}+u_{t}^{\prime}B_{t}\right)dt+\left(\bar{X}_{t}C_{t}^{\prime}+u_{t}^{\prime}D_{t}\right)dW_{t},\\
    X_{0}=x.
    \end{array}\right.
    \end{equation}
    From \cite[Lemma 1.6]{DelbaenandTang2010}, we know $A_{t}\bar{X}_{t}\in L_{\mathbb{F},\mathbb{P}}^2\left(\Omega;L^1(0,T;\mathbb{R})\right)$. Therefore the equation \eqref{SDE2} admits a unique solution $X\in L_{\mathbb{F},\mathbb{P}}^2(\Omega;C([0,T];\mathbb{R}))$ due to \cite[Proposition 2.1]{SunandYong2014}. Now we are going to prove $\Theta\big(\bar{X}\big):=X$ is a contraction map. For $\bar{X}^{1},\bar{X}^{2}\in L_{\mathbb{F},\mathbb{P}}^2(\Omega;C([0,T];\mathbb{R}))$ with $\bar{X}^1_0=\bar{X}^2_0=x$, we denote
    \begin{align*}
    X^{1}=\Theta\big(\bar{X}^{1}\big),\quad X^{2}=\Theta\big(\bar{X}^{2}\big)
    \end{align*}
    and
    \begin{align*}
    \Delta X=X^{1}-X^{2},\quad\Delta \bar{X}=\bar{X}^{1}-\bar{X}^{2},
    \end{align*}
    and obtain the following SDE:
    \begin{equation}\label{SDE3}
    \left\{\begin{array}{l}
    d\Delta X_{t}=A_{t}\Delta\bar{X}_{t}dt+\Delta\bar{X}_{t}C_{t}^{\prime}dW_{t},\\
    \Delta X_{0}=0.
    \end{array}\right.
    \end{equation}
    For any $0<\varepsilon<T$, from \cite[Lemma 1.6]{DelbaenandTang2010} we have
    \begin{equation*}
    \begin{aligned}
    \mathbb{E}\bigg[\sup_{0\leqslant t\leqslant \varepsilon}\big|\int_{0}^{t}A_{s}\Delta\bar{X}_{s}ds\big|^{2}\bigg]
    &\leqslant\mathbb{E}\bigg[\big|\int_{0}^{\varepsilon}|A_{t}|~\sup_{0\leqslant s\leqslant \varepsilon}|\Delta\bar{X}_{s}|dt\big|^{2}\bigg]\\
    &\leq8~\|A\|^{2}_{\mathrm{BMO}_{2}}~\mathbb{E}\bigg[\int_{0}^{\varepsilon}\sup_{0\leqslant s\leqslant \varepsilon}|\Delta\bar{X}_{s}|^{2}dt\bigg]\\
    &\leq8\varepsilon~\|A\|^{2}_{\mathrm{BMO}_{2}}~\mathbb{E}\bigg[\sup_{0\leqslant t\leqslant \varepsilon}\big|\Delta\bar{X}_{t}\big|^{2}\bigg].
    \end{aligned}
    \end{equation*}
    On the other hand, by BDG-inequality and,  we have
    \begin{equation*}
    \begin{aligned}
    \mathbb{E}\bigg[\sup_{0\leqslant t\leqslant \varepsilon}\big|\int_{0}^{t}\Delta\bar{X}_{s}C_{s}dW_{s}\big|^{2}\bigg]
    \leqslant 4\mathbb{E}\bigg[\big|\int_{0}^{\varepsilon}|\Delta\bar{X}_{s}C_{s}|^{2}ds\big|\bigg]\leq4\varepsilon K_{C}~\mathbb{E}\bigg[\sup_{0\leqslant t\leqslant \varepsilon}\big|\Delta\bar{X}_{t}\big|^{2}\bigg],
    \end{aligned}
    \end{equation*}
    where the positive constant $K_{C}$ is determined by the uniform bound of process $C$. Therefore, we can deduce that
    \begin{equation*}
    \begin{aligned}
    \mathbb{E}\bigg[\sup_{0\leqslant t\leqslant \varepsilon}\big|\Delta X_{t}\big|^{2}\bigg]&\leqslant 2\mathbb{E}\bigg[\sup_{0\leqslant t\leqslant \varepsilon}\big|\int_{0}^{t} A_{s}\Delta\bar{X}_{s}ds\big|^{2}\bigg]+2\mathbb{E}\bigg[\sup_{0\leqslant t\leqslant \varepsilon}\big|\int_{0}^{t}\Delta\bar{X}_{s}C_{s}dW_{s}\big|^{2}\bigg]\\
    &\leqslant\big(16\varepsilon~\|A\|^{2}_{\mathrm{BMO}_{2}}+8\varepsilon K_{C}\big)\mathbb{E}\bigg[\sup_{0\leqslant t\leqslant \varepsilon}\big|\Delta\bar{X}_{t}\big|^{2}\bigg].
    \end{aligned}
    \end{equation*}
    Choosing suitable $\varepsilon$ such that
    \begin{align*}
    0<16\varepsilon~\|A\|^{2}_{\mathrm{BMO}_{2}}+8\varepsilon K_{C}<1,
    \end{align*}
    we can get that $\Theta\big(\bar{X}\big):=X$ is a contraction map on $[0,\varepsilon]$. We can use the same method to verify
    \begin{align*}
    \mathbb{E}\bigg[\sup_{0\leqslant t\leqslant \varepsilon}\big|\Delta X_{t}\big|^{2}\bigg]<\mathbb{E}\bigg[\sup_{0\leqslant t\leqslant \varepsilon}\big|\Delta\bar{X}_{t}\big|^{2}\bigg]
    \end{align*}
    on $[\varepsilon,2\varepsilon],\cdots$, and finish our proof.
    \end{proof}
\section{Optimal pair for the robust control problem}\label{section 3}

In this section, we drive the optimal control and optimal value of the robust control problem \eqref{MMV1}, which involve the unique solutions to two BSDEs with unbounded coefficients. The maim technique is to find the saddle point. Following \cite{hu2023constrained}, we introduce a family of stochastic process process $R^{(\eta,u)}$ defined by
\begin{align*}
R_{t}^{(\eta,u)}:=h_{t}X_{t}^{u}+\frac{1}{2 \theta}\left(\Lambda_{t}^{\eta}Y_{t}-1\right),\quad t\in[0,T],~(\eta,u)\in\mathcal{A}\times\Pi,
\end{align*}
where $h$ and $Y$ are determined by two BSDEs studied in subsection \ref{section3.1}. To verify that $(\hat{\eta}$, $\hat{u})$ is the optimal pair of the MMV problem, we are aiming toprove the following five statements:
\begin{align}
&R_{T}^{(\eta,u)}=X_{T}^{u}+\frac{1}{2\theta}\left(\Lambda_{T}^{\eta}-1\right)\text{ for all }(\eta,u)\in\mathcal{A} \times\Pi;\label{RT}\\
&R_{0}^{(\eta,u)}=R_{0}\text{ is a constant for all }(\eta,u)\in \mathcal{A}\times\Pi;\label{R0}\\
&\mathbb{E}^{\mathbb{P}^{\hat{\eta}}}\left[R_{T}^{(\hat{\eta},u)}\right]\leqslant R_{0}\text{ for all }u\in\Pi;\label{eta hat statement}\\
&\mathbb{E}^{\mathbb{P}^{\eta}}\left[R_{T}^{(\eta,\hat{u})}\right]\geqslant R_{0}\text{ for all }\eta\in\mathcal{A};\label{u hat statement}\\
&\mathbb{E}^{\mathbb{P}^{\hat{\eta}}}\left[R_{T}^{(\hat{\eta},\hat{u})}\right]=R_{0}\label{eta u statement}.
\end{align}
\subsection{BSDEs with unbounded coefficients}\label{section3.1}
In this subsection, we will give the solvability of two BSDEs with unbounded coefficients, which plays a key role in solving the robust control problem.

First, we consider the following  backward stochastic differential equations:
\begin{equation}\label{h}
\left\{\begin{array}{l}
dh_{t}=\left[\left(-A_{t}+(D^{-1}_{t}B_{t})^{\prime}C_{t}\right)h_{t}+(D^{-1}_{t}B_{t})^{\prime}L_{t}+\frac{L_{t}^{\prime}L_{t}}{h_{t}}\right]dt+L_{t}^{\prime}dW_{t},\\
h_{T}=1.
\end{array}\right.
\end{equation}
It can be observed that equation \eqref{h} is a backward stochastic Riccati equation. Although this class of BSDEs with bounded coefficients are well studied in the literature, the solvability of \eqref{h} cannot be obtained from existing results since $A$ is unbounded.
\begin{lemma}\label{h solvability}
   Under assumption (A), the BSDE \eqref{h} admits a unique solution $(h, L) \in L^{\infty}_{\mathbb{F},\mathbb{P}}(0,T;\mathbb{R})\times L^{2,\mathrm{BMO}}_{\mathbb{F},\mathbb{P}}(0,T;\mathbb{R}^{n})$. In addition, the first component of the solution satisfies $h_{t}\geqslant\delta$, for all $t \in[0, T]$ and some $\delta > 0$. Moreover, there exists a constant $K>0$ such that for any $p>1$ and $t\in[0,T]$, we have
   \begin{equation*}
       \mathbb{E}_{t}\left[\mathcal{E} \left(\int_{t}^{T}-h^{-1}_{s}L^{\prime}_{s}dW_{s}\right)^{p}\right]\leq K.
   \end{equation*}
\end{lemma}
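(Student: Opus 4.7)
The plan is to linearize the quadratic BSDE \eqref{h} through the substitution $g:=1/h$. Formally, if $(h,L)$ is a strictly positive solution of \eqref{h}, then setting $N:=-L/h^{2}$ and applying It\^o's formula to $1/h$, the troublesome quadratic term $|L|^{2}/h$ in \eqref{h} is exactly cancelled by the second-order It\^o correction $|L|^{2}/h^{3}$, and $(g,N)$ is seen to solve the linear BSDE
\begin{equation*}
dg_{t}=\bigl[(A_{t}-(D_{t}^{-1}B_{t})^{\prime}C_{t})g_{t}+(D_{t}^{-1}B_{t})^{\prime}N_{t}\bigr]dt+N_{t}^{\prime}dW_{t},\qquad g_{T}=1.
\end{equation*}
I would construct $g$ directly from this linear BSDE, then define $h:=1/g$ and $L:=-h^{2}N$, and verify \eqref{h} a posteriori via It\^o.

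Because $D^{-1}B$ is essentially bounded by assumption (A), the Dol\'eans-Dade exponential $\mathcal{E}(-\int(D^{-1}B)^{\prime}dW)$ is a uniformly integrable martingale with all $L^{p}$ moments, and Girsanov yields an equivalent measure $\tilde{\mathbb{P}}$ under which $\tilde{W}_{t}:=W_{t}+\int_{0}^{t}D_{s}^{-1}B_{s}ds$ is Brownian. The linear BSDE for $g$ becomes $dg_{t}=(A_{t}-(D_{t}^{-1}B_{t})^{\prime}C_{t})g_{t}dt+N_{t}^{\prime}d\tilde{W}_{t}$, with explicit representation
\begin{equation*}
g_{t}=\tilde{\mathbb{E}}\Bigl[\exp\Bigl(-\int_{t}^{T}(A_{s}-(D_{s}^{-1}B_{s})^{\prime}C_{s})ds\Bigr)\Bigm|\mathcal{F}_{t}\Bigr],
\end{equation*}
and $N$ recovered from the martingale representation theorem applied under $\tilde{\mathbb{P}}$.

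The central step is to prove two-sided bounds $0<\underline{g}\leqslant g_{t}\leqslant\bar{g}<\infty$. For the upper bound, the elementary inequality $\bigl|\int_{t}^{T}A_{s}ds\bigr|\leqslant\varepsilon\int_{t}^{T}A_{s}^{2}ds+T/(4\varepsilon)$ reduces the task to a John-Nirenberg/energy estimate $\tilde{\mathbb{E}}[\exp(\varepsilon_{0}\int_{\tau}^{T}A_{s}^{2}ds)\mid\mathcal{F}_{\tau}]\leqslant C$, valid for $\varepsilon_{0}$ small depending on $\|A\|_{\mathrm{BMO}_{2}}$; the passage from $\mathbb{P}$ to $\tilde{\mathbb{P}}$ is a standard application of Kazamaki's theorem on the preservation of the BMO property under a Girsanov change of measure whose kernel is itself BMO (here bounded). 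For the lower bound, Jensen's inequality together with Cauchy-Schwarz gives $\tilde{\mathbb{E}}[\int_{t}^{T}|A_{s}|ds\mid\mathcal{F}_{t}]\leqslant\sqrt{T}\,\|A\|_{\mathrm{BMO}_{2}(\tilde{\mathbb{P}})}$, whence $g_{t}\geqslant\exp(-C^{\prime})$ for an explicit $C^{\prime}$. Setting $h:=1/g$ and $L:=-h^{2}N$, It\^o's formula confirms that $(h,L)$ solves \eqref{h} with $h\geqslant\delta:=1/\bar{g}>0$; the $L^{2,\mathrm{BMO}}$ regularity of $L$ follows from a standard energy estimate for the linear BSDE combined with the boundedness of $h$, and uniqueness is inherited from uniqueness for the linear equation in $g$.

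For the final reverse H\"older estimate, observe that $-h^{-1}L=hN$ lies in $L^{2,\mathrm{BMO}}$ since $h$ is bounded and $N\in L^{2,\mathrm{BMO}}$; Kazamaki's $R_{p}$ theorem for stochastic exponentials of BMO martingales then yields the claimed conditional $p$-th moment bound (with $K$ depending on $p$ and the BMO norm). The main obstacle throughout is that the unboundedness of $A$ puts \eqref{h} outside the scope of standard quadratic BSDE results; the substitution $g=1/h$ is the decisive move, converting the problem into a linear BSDE with an explicit conditional-exponential solution, so the unboundedness of $A$ is tamed by the exponential integrability of BMO processes via John-Nirenberg.
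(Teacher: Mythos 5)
Your construction of $(h,L)$ is essentially the paper's own argument in different notation. The paper sets $h_t=\exp\big(\int_0^t[-A_s+(D_s^{-1}B_s)'C_s]ds\big)/\overline{h}_t$ with $\overline{h}_t=\overline{\mathbb{E}}_t\big[\exp\big(\int_0^T[-A_s+(D_s^{-1}B_s)'C_s]ds\big)\big]$, which is exactly $1/g_t$ for your $g_t=\tilde{\mathbb{E}}_t\big[\exp\big(-\int_t^T(A_s-(D_s^{-1}B_s)'C_s)ds\big)\big]$ under the same Girsanov measure; the reduction of the Riccati term to a linear equation, the two-sided bounds via John--Nirenberg exponential integrability of the BMO process $A$, the recovery of the martingale part by representation, the BMO estimate for $L$, and uniqueness via the one-to-one correspondence with the linear equation all coincide with the paper's proof.

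The genuine gap is in your last step. Kazamaki's reverse H\"older theorem for $\mathcal{E}(M)$ with $M$ a BMO martingale yields the $R_p$ inequality only for $p$ in an interval $(1,p^{*})$ whose right endpoint is determined by $\|M\|_{\mathrm{BMO}_2}$ and is finite in general; it does not give conditional $p$-th moment bounds for \emph{every} $p>1$, which is what the lemma asserts and what is genuinely needed later (in the proof of Lemma \ref{Y solvability} the estimate is invoked with exponent $2kp$ for arbitrary $k,p>1$). So citing the $R_p$ theorem does not close the argument. The paper instead applies It\^o's formula to $\ln h$ to obtain the explicit identity
\begin{equation*}
\mathcal{E}\left(\int_{t}^{T}-h^{-1}_{s}L^{\prime}_{s}dW_{s}\right)=\frac{h_{t}}{h_{T}}\,\mathrm{exp}\left(\int_{t}^{T}\big[-A_{s}+(D_{s}^{-1}B_{s})^{\prime}C_{s}+(D_{s}^{-1}B_{s})^{\prime}h_{s}^{-1}L_{s}\big]ds\right),
\end{equation*}
so that for any fixed $p$ the $p$-th power is bounded by a constant times $\mathbb{E}_t\big[\exp\big(K\int_t^T(|A_s|+|L_s|)ds\big)\big]$, which is then controlled by the two-sided bounds on $h$ together with John--Nirenberg applied to the BMO martingales $\int\varepsilon A\,dW$ and $\int\varepsilon L\,dW$. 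You would need to add this (or an equivalent) argument to establish the moment bound for all $p>1$.
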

\begin{proof}
Under the assumption (A), it follows from BMO martingale theory that the process
\begin{equation*}
    N_{t}:= \mathcal{E}\left\{\int_0^t (-D^{-1}_{s}B_{s})^{\prime} d W_{s}\right\},
\end{equation*}
is a uniformly integrable martingale. Thus $\overline{W}_{t}:=W_{t}+\int_{0}^{t}D_{s}^{-1}B_{s}ds$ is a Brownian motion under probability $\overline{\mathbb{P}}$ defined by 
$$
\left.\frac{d \overline{\mathbb{P}}}{d \mathbb{P}}\right|_{\mathcal{F}_T}=N_{T}.
$$
 Denote by $\overline{\mathbb{E}}[\cdot]$ the expectation under $\overline{\mathbb{P}}$ and by $\overline{\mathbb{E}}_t[\cdot]$ the conditional expectation under $\overline{\mathbb{P}}$ given that $\mathcal{F}_t$. Set
\begin{equation*}
    \overline{h}_{T}=\mathrm{exp}\left(\int_0^T\left[-A_{t}+(D^{-1}_{t}B_{t})^{\prime}C_{t}\right] d t\right).
\end{equation*}
  Let $\varepsilon$ be a constant such that $0< \varepsilon< \frac{1}{\|A\|_{\mathrm{BMO}_{2}}}$. Then  $\|\varepsilon A\|_{\mathrm{BMO}_{2}}< 1$. Applying John-Nirenberg inequality (see \cite[Theorem 2.2]{BMO}) to BMO martingale  $\int_{0}^{\cdot}\varepsilon A_{s}dW_{s}$, we obtain for any $t\in[0,T]$ and $p>2$,
\begin{equation}\label{hT}
 \overline{\mathbb{E}}_t\left[K~\mathrm{exp}\left(p\int_t^T|A_{s}|d s\right)\right] \leqslant \overline{\mathbb{E}}_t\left[ K~\mathrm{exp}\left(\int_t^T\left(\varepsilon^{2}|A_{t}|^{2}+\frac{p^2}{4\varepsilon^{2}}\right) d t\right)\right]\leqslant K.
\end{equation}
Here and in the sequel, we shall use $K$ to represent a positive constant and can be different from line to line. Thus one can verify that $\overline{\mathbb{E}}_{t}\left[\overline{h}_{T}\right]$ is a square integrable martingale under $\overline{\mathbb{P}}$. By the martingale representation theorem, there exists $\overline{L} \in L^{2}_{\mathbb{F},\mathbb{\overline{P}}}(0,T;\mathbb{R}^{n})$ such that
\begin{equation}\label{bar h solution}
\overline{h}_{t}:=\overline{\mathbb{E}}_t\left[\mathrm{exp}\left(\int_0^T\left[-A_{t}+(D^{-1}_{t}B_{t})^{\prime}C_{t}\right] d t\right)\right] = \overline{h}_{0}+\int_{0}^{t}\overline{L}^{\prime}_{s}d\overline{W}_{s},
\end{equation}
which implies $\overline{h}_t>0$ for all $t\in[0,T]$. Denote \begin{equation}\label{1}
    h_{t}:= \frac{\exp\left(\int_{0}^{t}[-A_{s}+(D^{-1}_{s}B_{s})^{\prime}C_{s}]ds\right)}{\overline{h}_{t}},
\end{equation}
then by Jensen's inequality and \eqref{hT}, we have
\begin{equation*}
h_{t} =\frac{1}{\overline{\mathbb{E}}_t\left[\mathrm{exp}\left(\int_t^T\left[-A_{t}+(D^{-1}_{t}B_{t})^{\prime}C_{t}\right] d t\right)\right] } \leqslant \overline{\mathbb{E}}_t\left[\mathrm{exp}\left(\int_t^T\left[A_{t}-(D^{-1}_{t}B_{t})^{\prime}C_{t}\right] d t\right)\right] \leqslant K.
\end{equation*}
On the other hand, by \eqref{hT}, we obtain $h_{t}\geqslant \frac{1}{K}$. Thus, there exist a positive constant $\delta$, such that $h_{t}\geqslant \delta$, for all $t\in[0,T]$. In particular $h\in L^{\infty}_{\mathbb{F},\mathbb{P}}(0,T;\mathbb{R})$. Now denote
\begin{equation}\label{2}
    L_{t}:=- \frac{h_{t}\overline{L}_{t}}{\overline{h}_{t}},
\end{equation}
applying It\^o's formula to $h_{t}$, we obtain
\begin{equation}\label{h2}
 \begin{aligned}
d h_{t}= & \left[\left(- A_{t}+(D^{-1}_{t}B_{t})^{\prime}C_{t}\right) h_{t}+\frac{L^{\prime}_{t}L_{t}}{h_{t}}\right] d t+L^{\prime}_{t} d \overline{W}_{t} \\
= & \left[\left(- A_{t}+(D^{-1}_{t}B_{t})^{\prime}C_{t}\right) h_{t}+(D^{-1}_{t}B_{t})^{\prime}L_{t}+\frac{L^{\prime}_{t}L_{t}}{h_{t}}\right] d t+L^{\prime}_{t} d W_{t}.
\end{aligned}   
\end{equation}
Note that \eqref{h2} coincides with the BSDE \eqref{h}. From \eqref{1} and \eqref{2}, we see that $(h,L)$ has a one-to-one correspondence with $(\overline{h}, \overline{L})$. So the BSDE \eqref{h} also has a unique solution. Recalling the boundedness of $h$, from standard quadratic BSDE theory, we get $L\in L_{\mathbb{F},\mathbb{P}}^{2, \mathrm{BMO}}(0,T;\mathbb{R}^{n})$. 

Applying It\^o's formula to $\mathrm{ln} h$, we have
 \begin{equation*}
      d~\mathrm{ln}h  = [-A_{t}+(D_{t}^{-1}B_{t})^{\prime}C_{t}+(D_{t}^{-1}B_{t})^{\prime}h_{t}^{-1}L_{t}+\frac{1}{2}|h^{-1}_{t }L_{t}|^{2}]dt+ h^{-1}_{t}L^{\prime}_{t}dW_{t}.
    \end{equation*}  
 Therefore, 
    $$
        h_{T} = h_{t}~\cdot 
 \mathrm{exp}\left(\int_{t}^{T}[-A_{s}+(D_{s}^{-1}B_{s})^{\prime}C_{s}+(D_{s}^{-1}B_{s})^{\prime}h_{s}^{-1}L_{s}]ds\right)\cdot 
\mathrm{exp}\left(\int_{t}^{T}\frac{1}{2}|h^{-1}_{s}L_{s}|^{2} ds +\int_{t}^{T}h_{s}^{-1}L_{s}dW_{s}\right).
    $$
    Then,
    \begin{equation*}
       \mathcal{E} \left(\int_{t}^{T}-h^{-1}_{s}L^{\prime}_{s}dW_{s}\right) =\frac{h_{t}}{h_{T}}\mathrm{exp}\left(\int_{t}^{T}[-A_{s}+(D_{s}^{-1}B_{s})^{\prime}C_{s}+(D_{s}^{-1}B_{s})^{\prime}h_{s}^{-1}L_{s}]ds\right).
    \end{equation*}
    Let $\varepsilon$ be a constant such that $0< \varepsilon< \frac{1}{\|A\|_{\mathrm{BMO}_{2}}}$ and $0< \varepsilon< \frac{1}{\|L\|_{\mathrm{BMO}_{2}}}$. Then $\|\varepsilon A\|_{\mathrm{BMO}_{2}}< 1$ and $\|\varepsilon L\|_{\mathrm{BMO}_{2}}< 1$.
    For any $p>1$ and $t\in[0,T]$, we have
    \begin{equation}\label{hL}
        \begin{aligned}
        \mathbb{E}_{t}\left[\mathcal{E} \left(\int_{t}^{T}-h^{-1}_{s}L^{\prime}_{s}dW_{s}\right)^{p}\right] & = \mathbb{E}_{t}\left[ \left(\frac{h_{t}}{h_{T}}\right)^{p}\mathrm{exp}\left(p\int_{t}^{T}[-A_{s}+(D_{s}^{-1}B_{s})^{\prime}C_{s}+(D_{s}^{-1}B_{s})^{\prime}h_{s}^{-1}L_{s}]ds\right)\right]\\
        &\leqslant K\mathbb{E}_{t}\left[\mathrm{exp}\left(K\int_{t}^{T}(|A_{s}|+|L_{s}|)ds\right)\right]\\
        &\leqslant K \mathbb{E}_{t}\left[\mathrm{exp}\left(\int_{t}^{T}\left(\frac{\varepsilon^{2}A_{s}^{2}}{2}+\frac{\varepsilon^{2}|L_{s}|^{2}}{2}+\frac{K}{\varepsilon^{2}}\right)ds\right)\right]\\
        &\leqslant K\mathbb{E}_{t}\left[\mathrm{exp}\left(\int_{t}^{T}\varepsilon^{2}A_{s}^{2}ds\right)\right]^{\frac{1}{2}}\mathbb{E}_{t}\left[\mathrm{exp}\left(\int_{t}^{T}\varepsilon^{2}|L_{s}|^{2}ds\right)\right]^{\frac{1}{2}}\leq K.
\end{aligned}
    \end{equation}
 where $K$ denotes a constant that may vary from line to line; we used the boundedness of $h, B,C,D$ in the first inequality and John-Nirenberg Inequality to BMO-martingales $\int_{0}^{\cdot}\varepsilon A_{s}dW_{s}$ and  $\int_{0}^{\cdot}\varepsilon L_{s}dW_{s}$ in the last inequality. \\  
\end{proof}

Next, we introduce the following backward stochastic differential equation:
\begin{equation}\label{Y}
\left\{\begin{array}{l}
dY_{t}=\left[-|D_{t}^{-1}B_{t}+h^{-1}_{t}L_{t}|^{2}Y_{t}+2(D_{t}^{-1}B_{t}+h^{-1}_{t}L_{t})^{\prime}Z_{t}\right]dt+Z_{t}^{\prime}dW_{t},\\
Y_{T}=1,
\end{array}\right.
\end{equation}
where $(h,L)$ isthe unique solution of BSDE \eqref{h}. Since  $L$ is unbounded, the BSDE \eqref{Y} is a linear BSDE with unbounded coefficients and its solvability is by no means trivial. It is worth noting that  \cite{hu-nonhomogeneous-2023} solved a linear BSDE with unbounded coefficients. Unfortunately, our coefficients do not meet their conditions and thus we cannot apply their results to obtain the well-posedness of BSDE \eqref{Y}.
 \begin{lemma}\label{Y solvability}
   Under assumption (A), the BSDE \eqref{Y} admits a unique solution $(Y, Z) \in L^{\infty}_{\mathbb{F},\mathbb{P}}(0,T;\mathbb{R})\times L^{2,\mathrm{BMO}
    }_{\mathbb{F},\mathbb{P}}(0,T;\mathbb{R}^{n})$. In addition, the first component of the solution satisfies $Y_{t}\geqslant 1$, for all $t \in[0, T]$.
\end{lemma}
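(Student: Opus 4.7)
The plan is to reduce the BSDE \eqref{Y} to an explicit Feynman--Kac type formula under an auxiliary probability measure and then verify all the required properties using the BMO machinery that was already developed for Lemma \ref{h solvability}.

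First I would set $\alpha_t := D_t^{-1}B_t + h_t^{-1}L_t$. By Assumption (A), $D^{-1}B$ is essentially bounded and, because $h_t \geqslant \delta$ with $L \in L^{2,\mathrm{BMO}}_{\mathbb{F},\mathbb{P}}$ from Lemma \ref{h solvability}, the process $h^{-1}L$ lies in $L^{2,\mathrm{BMO}}_{\mathbb{F},\mathbb{P}}$ too, hence $\alpha \in L^{2,\mathrm{BMO}}_{\mathbb{F},\mathbb{P}}(0,T;\mathbb{R}^n)$. By standard BMO theory, $\mathcal{E}(-2\int_0^{\cdot}\alpha'dW)$ is a uniformly integrable martingale, so one can define $\tilde{\mathbb{P}}$ via $d\tilde{\mathbb{P}}/d\mathbb{P}|_{\mathcal{F}_T} = \mathcal{E}(-2\int_0^T\alpha'_s dW_s)$, under which $\tilde{W}_t := W_t + 2\int_0^t\alpha_s ds$ is a Brownian motion. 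By Kazamaki's invariance principle, $\alpha$ remains BMO under $\tilde{\mathbb{P}}$. A direct computation using It\^o's formula then suggests the candidate solution
\begin{equation*}
Y_t := \tilde{\mathbb{E}}_t\bigl[\exp\bigl(\textstyle\int_t^T |\alpha_s|^2 ds\bigr)\bigr],
\end{equation*}
since $Y_t \exp(\int_0^t|\alpha_s|^2 ds)$ is then a $\tilde{\mathbb{P}}$-martingale, and rewriting its martingale representation in terms of $dW$ recovers precisely the driver of \eqref{Y}.

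The main obstacle is the essential boundedness of this $Y_t$, which is not immediate because $\alpha$ is only BMO (not bounded), so $\tilde{\mathbb{E}}_t[\exp(\int_t^T|\alpha|^2ds)]$ is not directly covered by John--Nirenberg at exponent $1$. My plan is to split $[0,T]$ into a finite partition $0 = t_0 < t_1 < \cdots < t_N = T$ chosen fine enough that the restricted $\tilde{\mathbb{P}}$-BMO norm of $\alpha$ on each $[t_{i-1},t_i]$ is strictly less than $1$ (this is where I expect the bulk of the work). On each subinterval John--Nirenberg then yields a constant $C_i$ with $\tilde{\mathbb{E}}_{t_{i-1}}[\exp(\int_{t_{i-1}}^{t_i}|\alpha|^2ds)] \leqslant C_i$ a.s., and by iterating the tower property I get
\begin{equation*}
\tilde{\mathbb{E}}_t\bigl[\exp\bigl(\textstyle\int_t^T|\alpha|^2 ds\bigr)\bigr] \leqslant \prod_{i} C_i < \infty,
\end{equation*}
giving $Y \in L^\infty_{\mathbb{F},\mathbb{P}}$. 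If that partitioning lemma is unavailable in the form I want, an alternative is to run a contraction on $L^\infty$: the linear map $Y \mapsto \tilde{\mathbb{E}}_\cdot[1 + \int_\cdot^T|\alpha|^2 Y_s ds]$ has norm bounded by $\|\alpha\|_{\mathrm{BMO}(\tilde{\mathbb{P}})}^2$ on sufficiently short time intervals, and iterating backward in time gives existence of a bounded fixed point on $[0,T]$.

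Once boundedness is established, I would recover $Z$ from the martingale representation: write $d(Y_t e^{\int_0^t|\alpha|^2ds}) = \hat{Z}'_t d\tilde{W}_t$, set $Z_t := e^{-\int_0^t|\alpha|^2ds}\hat{Z}_t$, and verify directly by It\^o that $(Y,Z)$ solves \eqref{Y}. Membership $Z \in L^{2,\mathrm{BMO}}_{\mathbb{F},\mathbb{P}}$ is then obtained by applying It\^o to $|Y|^2$, using $Y \in L^\infty$ and the BMO property of $\alpha$ exactly as in standard quadratic BSDE arguments. The lower bound $Y_t \geqslant 1$ is immediate from the explicit formula since $\exp(\int_t^T|\alpha|^2 ds) \geqslant 1$. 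For uniqueness, suppose $(Y^i,Z^i)$, $i=1,2$, are two solutions in the stated class; their difference $(\Delta Y, \Delta Z)$ satisfies the same linear BSDE with zero terminal, and under $\tilde{\mathbb{P}}$ the process $\Delta Y_t \exp(\int_0^t|\alpha|^2ds)$ is a local martingale. Boundedness of $\Delta Y$ together with the integrability of $\exp(\int_0^T|\alpha|^2ds)$ under $\tilde{\mathbb{P}}$ (the same estimate as above) upgrades it to a true martingale, forcing $\Delta Y \equiv 0$ and then $\Delta Z \equiv 0$ by It\^o isometry.
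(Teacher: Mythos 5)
Your overall architecture coincides with the paper's: same candidate $Y_t=\widetilde{\mathbb{E}}_t[\exp(\int_t^T|\alpha_s|^2ds)]$ under the same Girsanov change, $Z$ from martingale representation, $Z\in L^{2,\mathrm{BMO}}$ from boundedness of $Y$, the lower bound $Y\geqslant 1$ for free, and uniqueness via the linear representation. However, the step you yourself flag as ``the bulk of the work'' --- boundedness of $Y$ --- contains a genuine gap. Your plan is to partition $[0,T]$ finely enough that the restricted BMO norm of $\alpha$ on each subinterval drops below $1$ and then apply John--Nirenberg piecewise. This fails for general BMO processes: BMO martingales are not ``sliceable'' in time. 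For instance, if $\alpha_s^2=\xi\,n\,\mathbf{1}_{[t_0,t_0+1/n]}(s)$ with $\xi$ a $\{0,1\}$-valued $\mathcal{F}_{t_0}$-measurable variable, the restricted BMO norm on any interval containing $[t_0,t_0+1/n]$ stays at $1$ no matter how short the interval is; a sequence of such bumps defeats any finite partition. Your fallback contraction argument has exactly the same defect, since the operator norm you invoke is again the restricted BMO norm squared, which need not shrink with the interval length. So membership of $\alpha$ in $L^{2,\mathrm{BMO}}$ alone does not deliver $\widetilde{\mathbb{E}}_t[\exp(\int_t^T|\alpha|^2ds)]\leqslant K$, and this is precisely the crux of the lemma.

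The paper closes this gap using extra structure from Lemma \ref{h solvability} rather than slicing. Writing $Y_t=\mathbb{E}_t\bigl[\mathcal{E}(-\int_t^T\alpha'dW)^2\bigr]$ and factoring $\mathcal{E}(-\int\alpha'dW)=\mathcal{E}(-\int h^{-1}L'dW)\cdot\mathcal{E}(-\int(D^{-1}B)'dW)\cdot\exp(-\int h^{-1}L'D^{-1}B\,ds)$, it applies H\"older with three exponents. The first factor is controlled by the all-moments estimate \eqref{hL}, $\mathbb{E}_t[\mathcal{E}(\int_t^T-h^{-1}L'dW)^p]\leqslant K$ for every $p>1$, which does not follow from BMO theory but from the explicit identity $\mathcal{E}(\int_t^T-h^{-1}L'dW)=\frac{h_t}{h_T}\exp(\int_t^T[-A_s+\cdots]ds)$; the second from boundedness of $D^{-1}B$; and the cross term, being only linear in $|L|$, is absorbed via $K|L|\leqslant\tfrac{\varepsilon^2}{2}|L|^2+\tfrac{K^2}{2\varepsilon^2}$ and John--Nirenberg applied to the scaled martingale $\int\varepsilon L'dW$ with $\|\varepsilon L\|_{\mathrm{BMO}_2}<1$. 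The moral is that the admissible scaling here is in the amplitude of the process (which works for exponents linear in $|A|,|L|$), not in the length of the time interval; to repair your proof you should import estimate \eqref{hL} rather than attempt the partition.
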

\begin{proof}
Let $(h,L)$ be the unique solution of BSDE \eqref{h}, we introduce the following two processes
\begin{equation*}
    J_{t} := \mathrm{exp}\left(\int_{0}^{t} |D^{-1}_{s}B_{s}+h^{-1}_{s}L_{s}|^{2}ds\right),
\end{equation*}
and
\begin{equation*}
M_{t}:= \mathcal{E}\left\{\int_0^t 2(D^{-1}_{s}B_{s}+h^{-1}_{s}L_{s})^{\prime} d W_{s}\right\}.
\end{equation*}
By Lemma \ref{h solvability}, $L\in L^{2,\mathrm{BMO}
    }_{\mathbb{F},\mathbb{P}}(0,T;\mathbb{R}^{n})$. Thus $M_{t}$ is a uniformly integrable martingale and $\widetilde{W}_{t}:=W_{t}+\int_{0}^{t}2(D_{s}^{-1}B_{s}+h^{-1}_{s}L_{s})ds$ is a Brownian motion under probability $\widetilde{\mathbb{P}}$ defined by 
$$
\left.\frac{d \widetilde{\mathbb{P}}}{d \mathbb{P}}\right|_{\mathcal{F}_T}=M_{T}.
$$

Let $\varepsilon$ be a constant such that $0< \varepsilon< \frac{1}{\|L\|_{\mathrm{BMO}_{2}}}$, then, for any $k>1$ and $p,q,r>1$ such that $\frac{1}{p}+\frac{1}{q}+\frac{1}{r}=1$, we have
\begin{equation}\label{Y bound}
\begin{aligned}
&\quad \mathbb{E}_{t}\left[\mathrm{exp}\left(-2k\int_{t}^{T}(D_{s}^{-1}B_{s}+h^{-1}_{s}L_{s})^{\prime}dW_{s}-k\int_{t}^{T}|D_{s}^{-1}B_{s}+h^{-1}_{s}L_{s}|^{2}ds\right)\right]\\
& = \mathbb{E}_{t}\left[\exp{\left(-2k\int_{t}^{T}h_{s}^{-1}L^{\prime}_{s}dW_{s}-k\int_{t}^{T}|h^{-1}_{s}L_{s}|^{2}ds\right)}\cdot \mathrm{exp}\left(-2k\int_{t}^{T}(D^{-1}_{s}B_{s})^{\prime}dW_{s}-k\int_{t}^{T}|D^{-1}_{s}B_{s}|^{2}ds\right)\right.\\
&\quad \quad \quad \left. \cdot \mathrm{exp}\left(-2k\int_{t}^{T}h^{-1}_{s}L^{\prime}_{s}D^{-1}_{s}B_{s}ds\right)\right]\\
& = \mathbb{E}_{t}\left[\mathcal{E}\left(\int_{t}^{T}-h^{-1}_{s}L^{\prime}_{s}dW_{s}\right)^{2k} \cdot\mathcal{E} \left(\int_{t}^{T}-(D^{-1}_{s}B_{s})^{\prime}dW_{s}\right)^{2k} \cdot \mathrm{exp}\left(-2k\int_{t}^{T}h^{-1}_{s}L^{\prime}_{s}D^{-1}_{s}B_{s}ds\right)\right]\\
& \leqslant \mathbb{E}_{t}\left[\mathcal{E} \left(\int_{t}^{T}-h^{-1}_{s}L^{\prime}_{s}dW_{s}\right)^{2kp}\right]^{\frac{1}{p}}\mathbb{E}_{t}\left[\mathcal{E} \left(\int_{t}^{T}-(D^{-1}_{s}B_{s})^{\prime}dW_{s}\right)^{2kq}\right]^{\frac{1}{q}}\mathbb{E}_{t}\left[\mathrm{exp}\left(\int_{t}^{T}\left(\varepsilon^{2}|L_{s}|^{2}+\frac{K}{\varepsilon^{2}}\right)ds\right)\right]^{\frac{1}{r}}\\
&\leqslant K,
\end{aligned}
\end{equation}
where we used \eqref{hL} and John-Nirenberg Inequality to BMO martingale  $\int_{0}^{\cdot}\varepsilon L_{s}dW_{s}$ in the last inequality.
Now, set
 \begin{equation*}
 \begin{aligned}
      Y_{t} = J_{t}^{-1}\widetilde{\mathbb{E}}_{t}\left[J_{T}\right]&= \widetilde{\mathbb{E}}_{t}\left[\mathrm{exp}\left(\int_{t}^{T}|D_{s}^{-1}B_{s}+h^{-1}_{s}L_{s}|^{2}ds\right)\right]\\
&=\mathbb{E}_{t}\left[\mathrm{exp}\left(-2\int_{t}^{T}(D_{s}^{-1}B_{s}+h^{-1}_{s}L_{s})^{\prime}dW_{s}-\int_{t}^{T}|D_{s}^{-1}B_{s}+h^{-1}_{s}L_{s}|^{2}ds\right)\right],
 \end{aligned}
\end{equation*}
 where we denote by $\widetilde{\mathbb{E}}_t[\cdot]$ the conditional expectation under $\widetilde{\mathbb{P}}$ given that $\mathcal{F}_t$. Then, by \eqref{Y bound}, $Y_{t}$ is bounded and $Y_{T}=1$. Similarly, we can prove $J_{t}Y_{t}$ is a square integrable martingale under $\widetilde{\mathbb{P}}$. By the martingale representation theorem, there exists $\widetilde{Z} \in L_{\mathbb{F},\mathbb{\widetilde{P}}}^2\left(0, T ; \mathbb{R}^n\right)$ such that
$$
J_{t}Y_{t}=J_{0} Y_{0}+\int_0^t \widetilde{Z}_{s}^{\prime} d \widetilde{W}_{s}.
$$
As a consequence,
$$
d(J_{t}Y_{t}) =\widetilde{Z}_{t}^{\prime} d \widetilde{W}_{t}  =J_{t}\left[Z_{t}^{\prime} d W_{t}+(D^{-1}_{t}B_{t}+h_{t}^{-1}L_{t})^{\prime}Z_{t}dt\right],
$$
where $Z_{t}=J_{t}^{-1} \widetilde{Z}_{t}$. By It\^o's lemma,
$$
d Y_{t} =d\left(J_{t}^{-1} \cdot J_{t} Y_{t}\right) =\left[-|D_{t}^{-1}B_{t}+h^{-1}_{t}L_{t}|^{2}Y_{t}+2(D_{t}^{-1}B_{t}+h^{-1}_{t}L_{t})^{\prime}Z_{t}\right]d t+Z_{t}^{\prime} d W_{t}.
$$
Thus $(Y, Z)$ satisfies \eqref{Y}.  Because $Y$ is bounded, we get
$$
\begin{aligned}
    \widetilde{\mathbb{E}}_\tau\left[\int_\tau^T|Z_{s}|^2 d s\right]  =\widetilde{\mathbb{E}}_\tau\left[\left(\int_\tau^T Z_{s}^{\prime} d \widetilde{W}_{s}\right)^2\right]  &=\widetilde{\mathbb{E}}_\tau\left[\left(Y(\tau)-1-\int_\tau^T|D_{s}^{-1}B_{s}+h_{s}^{-1}L_{s}|^{2} Y_{s} d s\right)^2\right] \\
& \leqslant K+K\widetilde{\mathbb{E}}_\tau\left[\left(\int_\tau^T|L_{s}|^{2} d s\right)^2\right] \leqslant K,
\end{aligned}
$$
for all stopping times $\tau \leqslant T$. Hence, $\int_0^{\cdot} Z_{s}^{\prime} d \widetilde{W}_{s}$ is a BMO martingale under $\widetilde{\mathbb{P}}$. Consequently $\int_0^{\cdot} Z_{s}^{\prime} d W_{s}$ is a BMO martingale under $\mathbb{P}$. This shows that $(Y, Z) \in L_{\mathbb{F},\mathbb{P}}^{\infty}(0, T ; \mathbb{R}) \times$ $L_{\mathbb{F},\mathbb{P}}^{2, \mathrm{BMO}}\left(0, T ; \mathbb{R}^n\right)$ is a solution of the  BSDE \eqref{Y}. Let us prove the uniqueness. Suppose 
$$
(Y, Z),\ (\hat{Y}, \hat{Z}) \in L_{\mathbb{F},\mathbb{P}}^{\infty}(0, T ; \mathbb{R}) \times L_{\mathbb{F},\mathbb{P}}^{2, \mathrm{BMO}}\left(0, T ; \mathbb{R}^n\right)
$$
are both solutions of \eqref{Y}. Set
$$
\Delta Y=Y-\hat{Y},\quad \Delta Z=Z-\hat{Z}.
$$
Then $(\Delta Y, \Delta Z)$ satisfies the following BSDE:
\begin{equation}\label{delta Y}
  \Delta Y_{t}=\int_t^T\left( |D_{s}^{-1}B_{s}+h_{s}^{-1}L_{s}|^{2}\Delta Y_{s}-2(D_{s}^{-1}B_{s}+h_{s}^{-1}L_{s})^{\prime} \Delta Z_{s}\right) d t-\int_t^T \Delta Z_{s}^{\prime} d W_{s} .  
\end{equation}
By the It\^o's formula, the solution of BSDE \eqref{delta Y} has the following representation:
$$
\Delta Y_{t}  = \Gamma_{t}^{-1}\mathbb{E}_{t}\left[\Gamma_{T} \Delta Y_{T}\right],
$$
where $\Gamma_{t} = 1+\int_{0}^{t}\Gamma_{s}\left[|D_{s}^{-1}B_{s}+h_{s}^{-1}L_{s}|^{2}ds-2(D_{s}^{-1}B_{s}+h_{s}^{-1}L_{s})^{\prime}dW_{s}\right]$. Since $\Delta Y_{T} = 0$, we have $\Delta Y_{t} = 0$ for all $t\in[0,T]$ and then $\Delta Z = 0$, for all $t\in[0,T]$. This completes the proof of the uniqueness.
\end{proof}
\subsection{Optimal pair}

In the following theorem, we drive the optimal control and optimal value of the robust control problem \eqref{MMV1}, which involve the unique solutions to the BSDEs \eqref{h} and \eqref{Y}. 
\begin{Theorem}\label{th-1}
The pair $(\hat{\eta},\hat{u})$ defined by 
\begin{equation}\label{optimal eta}
 \hat{\eta}=-D^{-1}B-h^{-1}L   
\end{equation}
and
\begin{equation}\label{optimal u}
\hat{u}=(hD^{\prime})^{-1}\left(\frac{\Lambda^{\eta}}{\theta}\big[(D^{-1}B+h^{-1}L)Y-Z\big]-X^{\hat{u}}L-hX^{\hat{u}}C\right)
\end{equation}
is optimal for the robust control problem \eqref{MMV1}, where $(h,L)$ and $(Y,Z)$ are the unique solutions of the BSDEs \eqref{h} and \eqref{Y}, respectively. Moreover,
\begin{equation*}
\sup_{u\in\Pi}\inf_{\eta\in\mathcal{A}} \mathbb{E}^{\mathbb{P}^{\eta}}\left[X_{T}^{u}+\frac{1}{2\theta}\left(\Lambda_{T}^{\eta}-1\right)\right]=xh_{0}+\frac{1}{2\theta}(Y_{0}-1).
\end{equation*}
\end{Theorem}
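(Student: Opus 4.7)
The plan is to verify the five properties \eqref{RT}--\eqref{eta u statement}, which together establish the saddle-point property of $(\hat\eta, \hat u)$ and identify $R_0$ as the common value. Properties \eqref{RT} and \eqref{R0} are immediate from the terminal conditions $h_T = Y_T = 1$ and the $\mathcal F_0$-measurability (hence determinism) of $h_0$ and $Y_0$, yielding $R_0 = xh_0 + \tfrac{1}{2\theta}(Y_0 - 1)$.

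For the saddle point, I would apply Ito's product rule to $R_t^{(\eta, u)} = h_t X_t^u + \tfrac{1}{2\theta}(\Lambda_t^\eta Y_t - 1)$, substituting the drivers of \eqref{h}, \eqref{SDE}, \eqref{Y} together with $d\Lambda_t^\eta = \Lambda_t^\eta \eta_t'\, dW_t$, and then expressing the drift with respect to the $\mathbb P^\eta$-Brownian motion $W^\eta$ via \eqref{Girsanov}. The drivers of \eqref{h} and \eqref{Y} are precisely designed so that all terms linear in $X^u$ independent of $(u,\eta)$ cancel (via \eqref{h}), and all terms proportional to $\Lambda^\eta Y$ independent of $(u,\eta)$ cancel (via \eqref{Y}). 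After these cancellations the remaining drift should reorganize as a difference of two squares,
\begin{equation*}
\mathrm{drift}\bigl(R_t^{(\eta, u)}\bigr) \;=\; \frac{\Lambda_t^\eta Y_t}{2\theta}\,\bigl|\eta_t - \hat\eta_t\bigr|^2 \;-\; \tfrac{1}{2}\,\bigl(u_t - \hat u_t\bigr)'\, \Xi_t\, \bigl(u_t - \hat u_t\bigr),
\end{equation*}
with $\hat\eta$ from \eqref{optimal eta} zeroing the first square and $\hat u$ from \eqref{optimal u} zeroing the second; here $\Xi_t$ is a positive-definite random matrix built from $hD'$ and the lower bound $DD' \geq \delta I$. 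Integrating from $0$ to $T$ and taking expectations under $\mathbb P^{\hat\eta}$, $\mathbb P^\eta$, and $\mathbb P^{\hat\eta}$ respectively then delivers \eqref{eta hat statement}, \eqref{u hat statement} and \eqref{eta u statement}.

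The decisive technical step is that these expectations are well-defined and that the stochastic integrals make no contribution. The martingale part of $R^{(\eta, u)}$ has integrands involving $hX^u$, $\Lambda^\eta YZ$, and $\Lambda^\eta\eta$. By Lemmas \ref{h solvability} and \ref{Y solvability}, $h, h^{-1}, Y, Y^{-1}$ are bounded and $L, Z \in L^{2,\mathrm{BMO}}_{\mathbb F, \mathbb P}$. Combined with the a priori estimate on $X^u$ from Lemma \ref{state space theorem}, the $L^p$-bound on $\mathcal E(-\int h^{-1} L'\, dW)$ at the end of Lemma \ref{h solvability}, and John--Nirenberg applied to $\int \varepsilon A\, dW$, these integrands are square-integrable under the relevant measures and give genuine martingales, justifying the expectation computation. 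A separate verification is that $\hat u \in \Pi$: substituting \eqref{optimal u} into \eqref{SDE} yields a linear SDE for $X^{\hat u}$ with random, unbounded coefficients that can be estimated by the same BMO and Girsanov machinery as in Lemma \ref{state space theorem}, so $X^{\hat u} \in L^2_{\mathbb F, \mathbb P}(\Omega; C([0, T]; \mathbb R))$ and $\hat u \in \Pi$.

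I expect the main obstacle to be precisely this last point. Because $A$ is merely BMO (not bounded) and $\hat u$ depends on both $\Lambda^{\hat\eta}$ and $X^{\hat u}$, the closed-loop $L^2$ estimate on $X^{\hat u}$ is delicate; it is the $L^p$-bound on $\mathcal E(-\int h^{-1} L'\, dW)$ from Lemma \ref{h solvability} that allows Holder's inequality to close the estimate despite the unboundedness of $A$. Once admissibility and the true-martingale property are secured, the saddle-point value is identified as $R_0 = xh_0 + \tfrac{1}{2\theta}(Y_0 - 1)$, completing the proof.
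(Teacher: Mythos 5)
Your overall strategy is the paper's: verify \eqref{RT}--\eqref{eta u statement} by applying It\^o's formula to $R^{(\eta,u)}$, complete squares in the drift, localize, and separately check admissibility of $(\hat{\eta},\hat{u})$. However, there are two concrete problems with the proposal as written.

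First, the claimed drift decomposition is not what the computation produces. Writing $\phi=D^{-1}B+h^{-1}L$, $\alpha^{u}=hD^{\prime}u+X^{u}L+hX^{u}C$ and $v^{u}:=\tfrac{1}{Y}\bigl(Z+\tfrac{\theta\alpha^{u}}{\Lambda}\bigr)-\phi$, the drift of $R^{(\eta,u)}$ under $\mathbb{P}^{\eta}$ equals
\begin{equation*}
\frac{\Lambda Y}{2\theta}\Bigl[\,\bigl|\eta-\hat{\eta}\bigr|^{2}+2\bigl(\eta-\hat{\eta}\bigr)^{\prime}v^{u}\,\Bigr]
=\frac{\Lambda Y}{2\theta}\Bigl[\,\bigl|\eta+\tfrac{1}{Y}\bigl(Z+\tfrac{\theta\alpha^{u}}{\Lambda}\bigr)\bigr|^{2}-\bigl|\phi-\tfrac{1}{Y}\bigl(Z+\tfrac{\theta\alpha^{u}}{\Lambda}\bigr)\bigr|^{2}\,\Bigr],
\end{equation*}
i.e.\ a square in $\eta$ centered at a $u$-dependent point, compensated by a cross term (equivalently a second square that depends on $u$ only through the same quantity $v^{u}$). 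There is no term of the form $-\tfrac12(u-\hat{u})^{\prime}\Xi(u-\hat{u})$ with $\Xi>0$. The sign structure you need still follows -- at $u=\hat{u}$ one has $v^{\hat{u}}=0$ and the drift is $\tfrac{\Lambda Y}{2\theta}|\eta-\hat{\eta}|^{2}\geqslant0$, and at $\eta=\hat{\eta}$ the drift vanishes \emph{identically in} $u$ -- but the last point means $\mathbb{E}^{\mathbb{P}^{\hat{\eta}}}[R_{T}^{(\hat{\eta},u)}]=R_{0}$ with equality for every admissible $u$, not a strict quadratic penalty for $u\neq\hat{u}$. If you try to verify your stated identity by direct computation it will fail, and the proof stalls there; you must replace it with the decomposition above (this is exactly \eqref{R} and the two displays following it in the paper).

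Second, the admissibility step is under-specified precisely where the paper needs a specific device. The closed-loop equation obtained by substituting \eqref{optimal u} into \eqref{SDE} is linear in $X^{\hat{u}}$ but carries the exogenous forcing $\tfrac{\Lambda^{\hat{\eta}}}{\theta}(\phi Y-Z)$, in which $Z$ and $L$ are only BMO and $\Lambda^{\hat{\eta}}$ is an unbounded exponential; the contraction/Girsanov argument of Lemma \ref{state space theorem} does not apply to it directly, and the $L^{p}$ bound on $\mathcal{E}(-\int h^{-1}L^{\prime}dW)$ alone does not close an a priori $L^{2}$ estimate on $X^{\hat{u}}$. The paper instead exhibits the explicit solution $X^{\hat{u}}_{t}=(\theta h_{0}x+Y_{0}-\Lambda^{\hat{\eta}}_{t}Y_{t})/(\theta h_{t})$ by checking $d(\theta h_{t}X_{t}^{\hat{u}}+\Lambda_{t}^{\hat{\eta}}Y_{t})=0$, then shows $(\theta h_{t}X_{t}^{\hat{u}}-\theta h_{0}x-Y_{0})^{2}/Y_{t}$ is a nonnegative local martingale and uses Fatou's lemma to get $\sup_{\iota}\mathbb{E}[(X^{\hat{u}}_{\iota})^{2}]<\infty$ over stopping times, from which $\hat{u}\in\Pi$ and $\mathbb{E}[(\Lambda^{\hat{\eta}}_{t})^{2}]<\infty$ both follow. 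Without identifying this explicit formula (or an equivalent substitute), your admissibility argument is a gap rather than a delicacy.
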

\begin{proof}
The whole proof will be splitted into two steps. For simplicity of notations, we denote:
\begin{equation}\label{notation}
    \phi_{t} := D^{-1}_{t}B_{t}+h_{t}^{-1}L_{t},\quad \alpha_{t}^{u}:=h_{t}D_{t}^{\prime}u_{t}+X_{t}^{u}L_{t}+h_{t}X_{t}^{u}C_{t}.
    \end{equation}
Apply It\^o's formula to $h_{t}X_{t}^{u}$, we can obtain that
\begin{equation}\label{hx}
    dh_{t}X_{t}^{u} = (\alpha_{t}^{u})^{\prime}\phi_{t}dt + (\alpha_{t}^{u})^{\prime}dW_{t}.
    \end{equation}
    \textbf{Step 1:}  To prove $(\hat{\eta},\hat{u})\in\mathcal{A}\times\Pi$. First, we show \eqref{optimal u} is well-defined, which means that when we use the feedback strategy \eqref{optimal u}, the state process equation
\begin{equation}\label{optimal x solvability}
    dX_{t}^{\hat{u}} = (A_{t}X_{t}^{\hat{u}}+\hat{u}^{\prime}_{t}B_{t})dt+(X_{t}^{\hat{u}}C_{t}^{\prime}+\hat{u}^{\prime}_{t}D_{t})dW_{t}.
\end{equation}
    admits a unique solution. Indeed, we can  confirm that 
\begin{equation}\label{optimal x}
X^{\hat{u}}_{t}=\frac{\theta h_{0}x+Y_{0}-\Lambda^{\hat{\eta}}_{t}Y_{t}}{\theta h_{t}},
\end{equation}
is a solution of \eqref{optimal x solvability}. Moreover, the above change $Y\rightarrow X^{\hat{u}}$ is invertible, so the uniqueness of \eqref{optimal x solvability} follows from that of \eqref{Y}.\\
Observe that
\begin{equation*}
    \alpha^{\hat{u}}_{t} = \frac{\Lambda_{t}^{\hat{\eta}}}{\theta}(\phi_{t} Y_{t} -Z_{t}).
\end{equation*}
By It\^o's formula, we have
\begin{align*}
d(\theta h_{t}X_{t}^{\hat{u}}+\Lambda_{t}^{\hat{\eta}}Y_{t})&=\bigg[\theta (\alpha_{t}^{\hat{u}})^{\prime}\phi_{t}+\frac{\Lambda_{t}^{\hat{\eta}}|Z_{t}|^{2}}{Y_{t}}-\frac{\Lambda_{t}^{\hat{\eta}}|\phi_{t}Y_{t}-Z_{t}|^{2}}{Y_{t}}+\Lambda_{t}^{\hat{\eta}}\hat{\eta}_{t}^{\prime}Z_{t}\bigg]dt\\
&\quad+\bigg[\theta(\alpha_{t}^{\hat{u}})^{\prime}+\Lambda_{t}^{\hat{\eta}}Z_{t}^{\prime}+\Lambda_{t}^{\hat{\eta}}Y_{t}\hat{\eta}_{t}^{\prime}\bigg]dW_{t}\\
&=\bigg[\Lambda_{t}^{\hat{\eta}}(\phi_{t}Y_{t}-Z_{t})^{\prime}\phi_{t}+\frac{\Lambda_{t}^{\hat{\eta}}(\phi_{t}Y_{t})^{\prime}(2Z_{t}-\phi_{t}Y_{t})}{Y_{t}}-\Lambda_{t}^{\hat{\eta}}\phi_{t}^{\prime}Z_{t}\bigg]dt\\
&\quad+\bigg[\Lambda_{t}^{\hat{\eta}}(\phi_{t}Y_{t}-Z_{t})^{\prime}+\Lambda_{t}^{\hat{\eta}}Z_{t}^{\prime}-\Lambda_{t}^{\hat{\eta}}Y_{t}\phi_{t}^{\prime}\bigg]dW_{t}\\
&=0,
\end{align*}
which implies for any $t\in[0,T]$, \eqref{optimal x} holds.\\
By It\^o's formula, we have
$$
\begin{aligned}
d(\Lambda_{t}^{\hat{\eta}})^{2}Y_{t}&=(\Lambda_{t}^{\hat{\eta}})^{2}\bigg[\frac{|Z_{t}|^{2}}{Y_{t}}-\frac{|\phi_{t}Y_{t}-Z_{t}|^{2}}{Y_{t}}+2\hat{\eta}_{t}^{\prime}Z_{t}+Y_{t}|\hat{\eta}_{t}|^{2}\bigg]dt+(\Lambda_{t}^{\hat{\eta}})^{2}\bigg[Z_{t}^{\prime}+2Y_{t}\hat{\eta}_{t}^{\prime}\bigg]dW_{t}\\
&=(\Lambda_{t}^{\hat{\eta}})^{2}\bigg[\frac{(\phi_{t}Y_{t})^{\prime}(2Z_{t}-\phi_{t}Y_{t})}{Y_{t}}-2\phi_{t}^{\prime}Z_{t}+Y_{t}|\phi_{t}|^{2}\bigg]dt+(\Lambda_{t}^{\hat{\eta}})^{2}\bigg[Z_{t}^{\prime}-2Y_{t}\phi_{t}^{\prime}\bigg]dW_{t}\\
&=(\Lambda_{t}^{\hat{\eta}})^{2}Y_{t}\cdot\frac{Z_{t}^{\prime}-2Y_{t}\phi_{t}^{\prime}}{Y_{t}}dW_{t},
\end{aligned}
$$
which implies
\begin{align*}
d\frac{(\theta h_{t}X_{t}^{\hat{u}}-\theta h_{0}x-Y_{0})^{2}}{Y_{t}}=\frac{(\theta h_{t}X_{t}^{\hat{u}}-\theta h_{0}x-Y_{0})^{2}}{Y_{t}}\cdot\frac{Z_{t}^{\prime}-2Y_{t}\phi_{t}^{\prime}}{Y_{t}}dW_{t}.
\end{align*}
Therefore
\begin{align*}
\frac{(\theta h_{t}X_{t}^{\hat{u}}-\theta h_{0}x-Y_{0})^{2}}{Y_{t}}
\end{align*}
is a local martingale, which means there exists an increasing sequence of stopping times $\tau_{n}\uparrow+\infty$ such that when $n\rightarrow+\infty$, it holds for any stopping time $\iota\leqslant T$ that
\begin{align*}
\mathbb{E}\left[\frac{(\theta h_{\iota\wedge\tau_{n}}X_{\iota\wedge\tau_{n}}^{\hat{u}}-\theta h_{0}x-Y_{0})^{2}}{Y_{\iota\wedge \tau_{n}}}\right]=Y_{0}.
\end{align*}
Let $n\rightarrow\infty$, by Fatou's lemma we have
\begin{align*}
\mathbb{E}\left[\frac{(\theta h_{\iota}X_{\iota}^{\hat{u}}-\theta h_{0}x-Y_{0})^{2}}{Y_{\iota}}\right]\leqslant Y_{0}.
\end{align*}
Since $Y,h$ are bounded, we get
\begin{align}\label{ Xexpectation bound}
\mathbb{E}\left[(X_{\iota}^{\hat{u}})^{2}\right]<C
\end{align}
for some constant $C$ and for any stopping time $\iota\leqslant T$. From \cite[Lemma 4.3]{Huetal.2022}, we can obtain $\hat{u}\in\Pi$.
Moreover, by \eqref{optimal x}, we get
$$
\Lambda_{t}^{\hat{\eta}} = \frac{\theta h_{0}x+Y_{0}-\theta h_{t}X_{t}^{\hat{u}}}{Y_{t}}.
$$
Combined with \eqref{ Xexpectation bound} and the fact that $Y,h$ are bounded, we immediately obtain for any $t\in[0,T]$,
\begin{align*}
\mathbb{E}[(\Lambda_{t}^{\hat{\eta}})^{2}]<\infty,
\end{align*}
which proves $\hat{\eta}\in \mathcal{A}$.\\
\textbf{Step 2.} To prove $(\hat{\eta},\hat{u}) $ is the optimal pair of the MMV problem, we only need to verify \eqref{RT}-\eqref{eta u statement}. First, from the definition of processes of $(h,L)$ and $(Y,Z)$, \eqref{RT} and \eqref{R0} are satisfied. Applying It\^o's formula to $R_{t}^{(\eta,u)}$ yields 
\begin{equation}\label{R}
\begin{aligned}
dR_{t}^{(\eta,u)}&=\bigg[\frac{1}{2\theta}\Lambda_{t}Y_{t}|\eta_{t}|^{2}+\eta_{t}^{\prime}\left(\frac{1}{\theta}\Lambda_{t} Z_{t}+\alpha_{t}^{u}\right)+(\alpha_{t}^{u})^{\prime}\phi_{t}+\frac{1}{2\theta}\Lambda_{t}\left(\frac{|Z_{t}|^{2}}{Y_{t}}-\frac{|\phi_{t}Y_{t}-Z_{t}|^{2}}{Y_{t}}\right)\bigg]dt\\
&\quad+\left[(\alpha_{t}^{u})^{\prime}+\frac{1}{2\theta}\Lambda_{t}Z_{t}^{\prime}+\frac{1}{2\theta}\Lambda_{t}Y_{t}\eta_{t}^{\prime}\right]dW_{t}^{\eta}\\
&=\frac{\Lambda_{t}}{2\theta}\bigg[Y_{t}\left|\eta_{t}+\frac{1}{Y_{t}}\left(Z_{t}+\frac{\theta \alpha_{t}^{u}}{\Lambda_{t}}\right)\right|^{2}-\bigg(\frac{|\phi_{t}Y_{t}-Z_{t}|^{2}}{Y_{t}}-\frac{|Z_{t}|^{2}}{Y_{t}}+\frac{1}{Y_{t}}\left|Z_{t}+\frac{\theta \alpha^{u}_{t}}{\Lambda_{t}}\right|^{2}-\frac{2\theta (\alpha_{t}^{u})^{\prime}\phi_{t}}{\Lambda_{t}}\bigg)\bigg]dt\\
&\quad+\left[(\alpha_{t}^{u})^{\prime}+\frac{1}{2\theta}\Lambda_{t}Z_{t}^{\prime}+\frac{1}{2\theta}\Lambda_{t}Y_{t}\eta_{t}^{\prime}\right]dW_{t}^{\eta}.
\end{aligned}
\end{equation}
Let $\hat{\eta}$ be defined in \eqref{optimal eta}. For any $u\in \Pi$ and an increasing sequence of stopping times $\tau_{n}\uparrow+\infty$, we have
\begin{equation}\label{eta hat}
\begin{aligned}
\mathbb{E}^{\mathbb{P}^{\hat{\eta}}}\left[R_{t\wedge\tau_{n}}^{(\hat{\eta},u)}\right]&=xh_{0}+\frac{1}{2\theta}(Y_{0}-1)+\mathbb{E}^{\mathbb{P}^{\hat{\eta}}}\int_{0}^{t\wedge\tau_{n}}\bigg[\frac{1}{2\theta}\Lambda_{s}^{\hat{\eta}}Y_{s}|(\alpha_{s}^{u})^{\prime}|^{2}+\hat{\eta}_{s}^{\prime}\left(\frac{1}{\theta}\Lambda_{s}^{\hat{\eta}}Z_{s}+\alpha_{s}^{u}\right)+(\alpha_{s}^{u})^{\prime}\phi_{s}\\
&\qquad\qquad+\frac{1}{2\theta}\Lambda_{s}^{\hat{\eta}}\left(\frac{|Z_{s}|^{2}}{Y_{s}}-\frac{|\phi_{s}Y_{s}-Z_{s}|^{2}}{Y_{s}}\right)\bigg]ds\\
&=xh_{0}+\frac{1}{2\theta}(Y_{0}-1)+\mathbb{E}^{\mathbb{P}^{\hat{\eta}}}\int_{0}^{t\wedge\tau_{n}}\frac{1}{2\theta}\Lambda_{s}^{\hat{\eta}}\bigg[Y_{s}|\phi_{s}|^{2}-2\phi_{s}^{\prime}Z_{s}+\phi_{s}^{\prime}(2Z_{s}-\phi_{s}Y_{s})\bigg]ds\\
&=xh_{0}+\frac{1}{2\theta}(Y_{0}-1).
\end{aligned}    
\end{equation}
For any $u \in \Pi$, in Theorem \ref{state space theorem}, we have verified that $\mathbb{E}\left[\sup _{t \in[0, T]}\left(X_t^u\right)^2\right]<\infty$. Since $h$ and $Y$ are bounded, we get
\begin{equation}\label{local}
\begin{aligned}
\mathbb{E}\left[\sup _{t \in[0, T]}\left|\Lambda_t^{\hat{\eta}} R_t^{\hat{\eta}, u}\right|\right] & \leqslant K \mathbb{E}\left[\sup _{t \in[0, T]}\left|\Lambda_t^{\hat{\eta}} X_t^u\right|\right]+K \mathbb{E}\left[\sup _{t \in[0, T]}\left(\Lambda_t^{\hat{\eta}}\right)^2\right]+K \mathbb{E}\left[\sup _{t \in[0, T]} \Lambda_t^{\hat{\eta}}\right] \\
& \leqslant K \mathbb{E}\left[\sup _{t \in[0, T]}\left(X_t^u\right)^2\right]+K \mathbb{E}\left[\sup _{t \in[0, T]}\left(\Lambda_t^{\hat{\eta}}\right)^2\right]+K \\
& \leqslant K \mathbb{E}\left[\sup _{t \in[0, T]}\left(X_t^u\right)^2\right]+K\mathbb{E}\left[\left(\Lambda_T^{\hat{\eta}}\right)^2\right]+K<\infty
\end{aligned}
\end{equation}
where $K>0$ denotes a constant that may vary from line to line and we use the elementary inequality $2 a b \leqslant a^2+b^2$ in the second inequality, Doob's inequality in the third inequality. Sending $n \rightarrow \infty$ in \eqref{eta hat} and using the dominated convergence theorem, we get \eqref{eta hat statement}.

Next, let $\hat{u}$ be defined in \eqref{optimal u}. For any $\eta \in \mathcal{A}$ and an increasing sequence of stopping times $\tau_{n}\uparrow+\infty$, we have
\begin{equation}\label{local-2}
\begin{aligned}
\mathbb{E}^{\mathbb{P}^{\eta}}\left[R_{t\wedge\tau_{n}}^{(\eta,\hat{u})}\right]&=xh_{0}+\frac{1}{2\theta}(Y_{0}-1)+\mathbb{E}^{\mathbb{P}^{\eta}}\int_{0}^{t\wedge\tau_{n}}\frac{\Lambda_{s}}{2\theta}\bigg[Y_{s}\left|\eta_{s}+\frac{1}{Y_{s}}\left(Z_{s}+\frac{\theta\alpha^{\hat{u}}_{s}}{\Lambda_{s}}\right)\right|^{2}\\
&\qquad\qquad-\bigg(\frac{|\phi_{s}Y_{s}-Z_{s}|^{2}}{Y_{s}}-\frac{|Z_{s}|^{2}}{Y_{s}}+\frac{1}{Y_{s}}\left|Z_{s}+\frac{\theta\alpha^{\hat{u}}_{s}}{\Lambda_{s}}\right|^{2}-\frac{2\theta(\alpha^{\hat{u}}_{s})^{\prime}\phi_{s}}{\Lambda_{s}}\bigg)\bigg]ds\\
&=xh_{0}+\frac{1}{2\theta}(Y_{0}-1)+\mathbb{E}^{\mathbb{P}^{\eta}}\int_{0}^{t\wedge\tau_{n}}\frac{\Lambda_{s}}{2\theta}\bigg[Y_{s}\left|\eta_{s}+\frac{1}{Y_{s}}\left(Z_{s}+\frac{\theta\alpha^{\hat{u}}_{s}}{\Lambda_{s}}\right)\right|^{2}\\
&\qquad\qquad-\bigg(\phi_{s}^{\prime}(\phi_{s}Y_{s}-2Z_{s})+|\phi_{s}|^{2}Y_{s}-2(\phi_{s}Y_{s}-Z_{s})^{\prime}\phi_{s}\bigg)\bigg]ds\\
&=xh_{0}+\frac{1}{2\theta}(Y_{0}-1)+\mathbb{E}^{\mathbb{P}^{\eta}}\int_{0}^{t\wedge\tau_{n}}\frac{\Lambda_{s}}{2\theta}\bigg[Y_{s}\left|\eta_{s}+\frac{1}{Y_{s}}\left(Z_{s}+\frac{\theta\alpha^{\hat{u}}_{s}}{\Lambda_{s}}\right)\right|^{2}\bigg]ds\\
&\geqslant xh_{0}+\frac{1}{2\theta}(Y_{0}-1).
\end{aligned}
\end{equation}
Similarly, we can also prove $\mathbb{E}\left[\sup _{t \in[0, T]}\left|\Lambda_t^\eta R_t^{\eta, \hat{u}}\right|\right]<\infty$ and obtain \eqref{u hat statement}.
 
Finally, let $(\hat{\eta},\hat{u})$ be defined in \eqref{optimal eta} and \eqref{optimal u} and from Step 1, we have
\begin{align*}
\mathbb{E}^{\mathbb{P}^{\hat{\eta}}}\left[R_{t\wedge\tau_{n}}^{(\hat{\eta},\hat{u})}\right]=xh_{0}+\frac{1}{2\theta}(Y_{0}-1).
\end{align*}
This completes the proof.
\end{proof}
\section{Connection with the control problem with mean-variance cost functional}\label{section 4}
In this section, we study the following stochastic control problem:
\begin{equation}\label{LQ}
    \underset{u \in \Pi}{\operatorname{sup}}\left[\mathbb{E}[X^u_{T}]-\frac{\theta}{2}\mathrm{Var}(X^u_{T})\right],
\end{equation} 
where the state process satisfies \eqref{SDE}. We can show that the optimal control $\hat{u}$ of the robust stochastic control problem \eqref{MMV1} is exactly the optimal control of problem \eqref{LQ}.
\begin{Theorem}
   Under assumption (A), the control $\hat{u}$ defined by \eqref{optimal u} is the optimal control of stochastic control problem \eqref{LQ}. Moreover, the stochastic control problem \eqref{LQ} has the same optimal value with robust stochastic control problem \eqref{MMV1},
   \begin{equation*}
\underset{{u\in \Pi}}{\operatorname{sup}}\left[\mathbb{E}[X_{T}]-\frac{\theta}{2}\mathrm{Var}(X_{T})\right] = h_{0}x+\frac{1}{2\theta}(Y_{0}-1).
   \end{equation*}
\end{Theorem}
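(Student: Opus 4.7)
My plan is to prove both assertions together by (i) computing $V(X_T^{\hat u})$ directly and showing it equals $h_0 x + \frac{1}{2\theta}(Y_0-1)$, and (ii) establishing a matching upper bound $V(X_T^u) \leqslant h_0 x + \frac{1}{2\theta}(Y_0-1)$ for every $u\in\Pi$, where I write $V(X) := \mathbb{E}[X] - \frac{\theta}{2}\mathrm{Var}(X)$ for brevity.

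For (i), I exploit the explicit representation \eqref{optimal x}. Setting $t=T$ (so $h_T = Y_T = 1$) yields $X_T^{\hat u} = h_0 x + (Y_0-\Lambda_T^{\hat\eta})/\theta$, whence $\mathbb{E}[X_T^{\hat u}] = h_0 x + (Y_0-1)/\theta$ and $\mathrm{Var}(X_T^{\hat u}) = \theta^{-2}\bigl(\mathbb{E}[(\Lambda_T^{\hat\eta})^2] - 1\bigr)$. Since $\hat\eta = -\phi$, the stochastic-exponential identity gives $(\Lambda_T^{\hat\eta})^2 = \exp\bigl(-2\int_0^T\phi_s'\, dW_s - \int_0^T|\phi_s|^2\, ds\bigr)$, whose $\mathbb{P}$-expectation is exactly $Y_0$ by the explicit probabilistic representation of $Y_0$ derived in the proof of Lemma \ref{Y solvability}. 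Substitution yields $V(X_T^{\hat u}) = h_0 x + (Y_0-1)/\theta - (Y_0-1)/(2\theta) = h_0 x + (Y_0-1)/(2\theta)$.

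For (ii), I use the variance identity $\mathrm{Var}(X) = \inf_{\lambda\in\mathbb{R}}\mathbb{E}[(X-\lambda)^2]$ and, after completing the square in $X_T^u$ with the substitution $\mu = \lambda + 1/\theta$, rewrite
$$V(X_T^u) = \sup_{\mu\in\mathbb{R}}\Bigl\{\mu - \frac{1}{2\theta} - \frac{\theta}{2}\mathbb{E}[(X_T^u - \mu)^2]\Bigr\}.$$
It then suffices to prove the lower bound $\mathbb{E}[(X_T^u-\mu)^2] \geqslant (h_0 x - \mu)^2/Y_0$ for every $u \in \Pi$ and $\mu\in\mathbb{R}$; maximizing the resulting upper envelope $\mu - \frac{1}{2\theta} - \frac{\theta}{2Y_0}(h_0 x - \mu)^2$ over $\mu$ (the maximizer being $\mu = h_0 x + Y_0/\theta$) then produces precisely $h_0 x + (Y_0-1)/(2\theta)$. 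To obtain this lower bound, set $\xi_t := h_t X_t^u - \mu$ and apply It\^o's formula to $\xi_t^2/Y_t$. Using \eqref{hx} for $d(h_t X_t^u)$ and the BSDE \eqref{Y} for $dY_t$, a completion-of-squares calculation identifies the drift as $Y_t^{-1}\bigl|\alpha_t^u + \xi_t\phi_t - \xi_t Z_t/Y_t\bigr|^2 \geqslant 0$, so $\xi_t^2/Y_t$ is a non-negative local submartingale, and a localization/Fatou argument analogous to Step 1 of the proof of Theorem \ref{th-1} yields $\mathbb{E}[\xi_T^2/Y_T] \geqslant \xi_0^2/Y_0$, which is the claimed bound since $h_T = Y_T = 1$.

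The principal obstacle is the integrability of the local martingale terms arising in the It\^o expansion of $\xi_t^2/Y_t$: since $L$ and $Z$ are only $\mathrm{BMO}$-bounded, direct expectation cannot be taken without localization. I would use stopping times $\tau_n\uparrow T$, the $L^\infty$-bounds on $h$ and $Y$ from Lemmas \ref{h solvability} and \ref{Y solvability}, and the estimate $\mathbb{E}\bigl[\sup_t (X_t^u)^2\bigr] < \infty$ from Lemma \ref{state space theorem}, concluding by Fatou's lemma in exactly the manner used after the localization performed in Step 1 of the proof of Theorem \ref{th-1}. Combining (i) and (ii) shows simultaneously that $\hat u$ attains the supremum in \eqref{LQ} and that the MV optimal value coincides with the MMV optimal value from Theorem \ref{th-1}.
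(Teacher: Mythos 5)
Your proposal is correct, and it shares the paper's central computation — It\^o's formula applied to $Y_t^{-1}(h_tX_t^u-\gamma)^2$ with a completion of squares making the drift a nonnegative quadratic — but the surrounding architecture is genuinely different. The paper proceeds through a two-stage Lagrangian duality: it restricts to the constrained classes $\Pi^K=\{u:\mathbb{E}[X_T^u]=K\}$, dualizes the constraint with a multiplier $\gamma$, solves the inner quadratic-loss problem $\inf_u\mathbb{E}[(X_T-\gamma)^2]$ exactly (identifying the optimal feedback law \eqref{feedback stategy} for every $\gamma$), then optimizes over $\gamma$ and $K$, and finally verifies that the feedback at $\hat\gamma(\hat K)$ coincides with $\hat u$. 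You instead (i) evaluate the mean-variance functional at $\hat u$ directly via the closed form \eqref{optimal x} together with the identity $\mathbb{E}[(\Lambda_T^{\hat\eta})^2]=Y_0$ (which is indeed exactly the probabilistic representation of $Y_0$ from the proof of Lemma \ref{Y solvability}), and (ii) obtain a matching upper bound for arbitrary $u$ from the elementary variational formula $\mathrm{Var}(X)=\inf_{\lambda}\mathbb{E}[(X-\lambda)^2]$ plus the one-sided inequality $\mathbb{E}[(X_T^u-\mu)^2]\geqslant Y_0^{-1}(h_0x-\mu)^2$. Your route is logically leaner: it dispenses with the Lagrangian duality theorem and with the attainment question for the auxiliary problem at each $\gamma$, since optimality is certified separately by the explicit formula for $X_T^{\hat u}$; what it gives up is the explicit solution of the quadratic-loss problem for general targets, which the paper reuses in the investment--reinsurance application. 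One small correction: in the limiting step for the lower bound $\mathbb{E}[\xi_T^2/Y_T]\geqslant\xi_0^2/Y_0$, Fatou's lemma points the wrong way (it bounds $\mathbb{E}[\xi_T^2/Y_T]$ \emph{above} by $\liminf_n\mathbb{E}[\xi_{\tau_n}^2/Y_{\tau_n}]$, whereas you need the stopped expectations not to overshoot the limit); you should invoke dominated convergence instead, which is available precisely because of the domination by $K\sup_t(X_t^u)^2+K$ that you already cite, exactly as the paper does after \eqref{E hx-k}.
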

\begin{proof}
First, let us consider 
\begin{equation}\label{F(K)}
    F(K):= \underset{u\in \Pi^{K}}{\operatorname{inf}}\mathbb{E}\left[(X_{T}-K)^{2}\right],
\end{equation}
where
$$
\Pi^{K}:=\left\{u\in \Pi\mid \mathbb{E}[X_{T}^{u}] = K\right\}.
$$
It follows from \cite{trybula2019continuous}, that
\begin{equation}\label{connection}
    \begin{aligned}
        \underset{{u\in \Pi}}{\operatorname{sup}}\left[\mathbb{E}[X_{T}]-\frac{\theta}{2}\mathrm{Var}(X_{T})\right] &=  \underset{{K\in \mathbb{R}}}{\operatorname{sup}} \underset{{u\in \Pi^{K}}}{\operatorname{sup}}\left[K-\frac{\theta}{2}\mathbb{E}\left[(X_{T}-K)^{2}\right]\right]\\
        & = \underset{{K\in \mathbb{R}}}{\operatorname{sup}} \left[K-\frac{\theta}{2}\underset{{u\in \Pi^{K}}}{\operatorname{inf}}\mathbb{E}\left[(X_{T}-K)^{2}\right]\right]\\
        & = \underset{{K\in \mathbb{R}}}{\operatorname{sup}} \left[K-\frac{\theta}{2}F(K)\right].
    \end{aligned}
\end{equation}
For solving problem  \eqref{F(K)}, we introduce the following problem:
\begin{equation}\label{J}
    J(K,\gamma):=\inf _{u \in \Pi}\left[\mathbb{E}\left(X_T-\gamma\right)^2-(K-\gamma)^2\right].
\end{equation}
It follows the Lagrangian duality theorem (see \cite{MR0238472}), that
\begin{equation}\label{F(K)2}
F(K)=\sup_{\gamma\in \mathbb{R}}J(K,\gamma).
\end{equation}
Now, we first solve the problem \eqref{J} and we also use the notations defined by \eqref{notation}. Applying It\^o's formula to $Y_{t}^{-1}(h_{t}X_{t}^{u}-\gamma)^{2}$ gives
\begin{equation}\label{hx-k}
     \begin{aligned}
d Y_{t}^{-1}(h_{t}X_{t}^{u}-\gamma)^2 &=\left[ Y_{t}^{-1}\left\{\alpha^{u}_{t}+\left(\phi_{t}-\frac{Z_{t}}{Y_{t}}\right)(h_{t}X_{t}^{u}-\gamma)\right\}^{\prime}  \times\left\{ \alpha^{u}_{t}+\left(\phi_{t}-\frac{Z_{t}}{Y_{t}}\right)(h_{t}X^{u}_{t}-\gamma)\right\} \right]d t \\
&\quad+\left[2 Y_{t}^{-1}(h_{t}X_{t}^{u}-\gamma)(\alpha^{u}_{t})^{\prime}-(h_{t}X_{t}^{u}-\gamma)^2 (Y^{-2}_{t}Z_{t})^{\prime}\right] d W_{t} .
\end{aligned}   
\end{equation}
Notice that $(Y,Z)\in L^{\infty}_{\mathbb{F},\mathbb{P}}(0,T;\mathbb{R})\times L^{2,\mathrm{BMO}
}_{\mathbb{F},\mathbb{P}}(0,T;\mathbb{R}^{n})$ and $X\in L^{2}_{\mathbb{F}}(\Omega;C(0,T);\mathbb{R})$. We have that in \eqref{hx-k} the integrand of the stochastic integral with respect to the Brownian motion is locally square-integrable under $\mathbb{P}$. So this stochastic integral is an $(\mathbb{F}, \mathbb{P})$-local martingale. Then there is an increasing sequence of stopping times $\left\{\tau_n\right\}_{n\geq 1}$ such that $\tau_k \uparrow T$ as $n \rightarrow \infty$ and the local martingale stopped by $\left\{\tau_n\right\}_{n\geq 1}$ is a true $(\mathbb{F}, \mathbb{P})$ martingale. So integrating from 0 to $\tau_n$ and taking expectations on both sides of \eqref{hx-k} yield
\begin{equation}\label{E hx-k}
\begin{aligned}
&\mathbb{E}\left[Y_{\tau_{n}}^{-1}(h_{\tau_{n}}X_{\tau_{n}}^{u}-\gamma)^2\right] \\
&= Y_{0}^{-1}(h_{0}x-\gamma)^2+\mathbb{E}\left[\int_{0}^{\tau_{n}} Y_{t}^{-1}\left\{\alpha^{u}_{t}+\left(\phi_{t}-\frac{Z_{t}}{Y_{t}}\right)(h_{t}X_{t}^{u}-\gamma)\right\}^{\prime}  \times\left\{ \alpha^{u}_{t}+\left(\phi_{t}-\frac{Z_{t}}{Y_{t}}\right)(h_{t}X_{t}^{u}-\gamma)\right\} d t\right].
\end{aligned}   
\end{equation}
Since $X\in L_{\mathbb{F},\mathbb{P}}^2(\Omega;C([0,T];\mathbb{R}))$ and $Y,h$ is bounded, we have that the family $\left\{Y_{\tau_{n}}^{-1}\left(h_{\tau_{n}}X_{\tau_{n}}-\gamma\right)^2\right\}_{n\geq 1}$ is  dominated by a nonnegative, integrable random variable. In addition, we note that the integrand on the right-hand side of \eqref{hx-k} is nonnegative. Sending $n$ to $\infty$ and using the dominated convergence theorem and the monotone convergence theorem to the left-hand side and the right-hand side of \eqref{E hx-k}, respectively, yield
\begin{equation}\label{XT-gamma2}
\begin{aligned}
&\mathbb{E}[(X_{T}^{u}-\gamma)^{2}]\\
&=Y_{0}^{-1}(h_{0}x-\gamma)^{2}+\mathbb{E}\left[\int_{0}^{T} Y_{t}^{-1}\left\{\alpha^{u}_{t}+\left(\phi_{t}-\frac{Z_{t}}{Y_{t}}\right)(h_{t}X_{t}^{u}-\gamma)\right\}^{\prime}  \times\left\{ \alpha^{u}_{t}+\left(\phi_{t}-\frac{Z_{t}}{Y_{t}}\right)(h_{t}X_{t}^{u}-\gamma)\right\} d t\right].
\end{aligned}
\end{equation}
Therefore, by \eqref{notation}, we get the optimal feedback strategy  
\begin{equation}\label{feedback stategy}
h_{t}D_{t}^{\prime} \hat{u}_{t}^{\gamma(K)}+X_{t}^{\hat{u}^{\gamma(K)}}L_{t}+h_{t}X_{t}^{\hat{u}^{\gamma(K)}}C_{t}  = -\left(\phi_{t}-\frac{Z_{t}}{Y_{t}}\right)\left(h_{t}X_{t}^{\hat{u}^{\gamma(K)}}-\gamma(K)\right),
\end{equation}
and
\begin{equation*}
    J(K,\gamma) = Y_{0}^{-1}(h_{0}x-\gamma)^{2}.
\end{equation*}
Then, combined with the fact  $Y_{0}> 1$ obtained by Lemma \ref{Y solvability}, we get
\begin{equation*}
     F(K)=\sup_{\gamma\in \mathbb{R}}\left(Y_{0}^{-1}(h_{0}x-\gamma)^{2}-(K-\gamma)^{2}\right)= \frac{Y_{0}^{-1}(K-h_{0}x)^{2}}{1-Y_{0}^{-1}},
\end{equation*}
and 
\begin{equation*}
\hat{\gamma}(K) = \frac{Y_{0}^{-1}h_{0}x-K}{Y_{0}^{-1}-1}.    
\end{equation*}
Therefore, by \eqref{connection}, it holds
\begin{equation*}
    \hat{K} = h_{0}x+\frac{1-Y_{0}^{-1}}{\theta Y_{0}^{-1}},\quad \underset{{u\in \Pi}}{\operatorname{sup}}\left[\mathbb{E}[X_{T}]-\frac{\theta}{2}\mathrm{Var}(X_{T})\right] = h_{0}x+\frac{1}{2\theta}(Y_{0}-1),
\end{equation*}
and
\begin{equation}\label{hat gamma}
    \hat{\gamma}(\hat{K}) = h_{0}x+\frac{Y_{0}}{\theta}.
\end{equation}
Substituting \eqref{hat gamma} into \eqref{feedback stategy} and  using \eqref{optimal x} yield
\begin{equation*}
\begin{aligned}
\hat{u}_{t}
& = (h_{t}D^{\prime}_{t})^{-1}\left[-(\phi_{t}-\frac{Z_{t}}{Y_{t}})\left(h_{t}X_{t}^{\hat{u}}-h_{0}x-\frac{Y_{0}}{\theta}\right)-X_{t}^{\hat{u}}L_{t}-h_{t}X_{t}^{\hat{u}}C_{t}\right]\\
& = (h_{t}D_{t}^{\prime})^{-1}\left[(\phi_{t}Y_{t}-Z_{t})\left(\frac{\theta h_{0}x+Y_{0}-\theta h_{t}X_{t}^{\hat{u}}}{\theta Y_{t}}\right)-X_{t}^{\hat{u}}L_{t}-h_{t}X^{\hat{u}}_{t}C_{t}\right]\\
& = (h_{t}D_{t}^{\prime})^{-1}\left[\frac{\Lambda^{\hat{\eta}}}{\theta}\big(\phi_{t}Y_{t}-Z_{t}\big)-X_{t}^{\hat{u}}L_{t}-h_{t}X^{\hat{u}}_{t}C_{t}\right],
\end{aligned}
\end{equation*}
which is exactly the optimal strategy of the robust stochastic control problem. Moreover, from the Theorem \ref{th-1}, we obtain $\hat{u}\in \Pi$. This completes the proof.
\end{proof}
\section{Some applications}\label{section 5}
\subsection{Application to MMV and MV portfolio selection problem}

In this subsection, we consider a financial market consisting of a risk-free asset (the money market instrument or bond) whose price is $S_{0}$ and $n$ risky securities (the stocks) whose prices are $S_{1}, \ldots, S_{n}$, i.e., the number of risky securities is equal to the dimension of the Brownian motion. The asset prices $S_{k}, k=0,1, \ldots, n$, are driven by SDEs:
\begin{align*}
\left\{\begin{array}{l}
dS_{0,t}=r_{t}S_{0,t}dt\\
S_{0,0}=s_{0},
\end{array}\right.
\end{align*}
and
\begin{align*}
\left\{\begin{array}{l}
dS_{k,t}=S_{k,t}\left(\left(\mu_{k,t}+r_{t}\right)dt+\sum_{j=1}^{n}\sigma_{kj,t}dW_{j,t}\right),\\
S_{k,0}=s_{k},
\end{array}\right.
\end{align*}
where $r$ is the interest rate process andfor every $k=1,\ldots,n$, $\mu_{k}$ and $\sigma_{k}:=\left(\sigma_{k 1}, \ldots, \sigma_{k n}\right)$ are the mean excess return rate process and volatility rate process of the $k$ th risky security.

Define the mean excess return vector $\mu=\left(\mu_{1},\ldots,\mu_{n}\right)^{\prime}$ and volatility matrix
\begin{align*}
\sigma=\left(\begin{array}{c}
\sigma_{1} \\
\vdots \\
\sigma_{n}
\end{array}\right) \equiv\left(\sigma_{k j}\right)_{n\times n}.
\end{align*}

Consider a small investor whose actions cannot affect the asset prices. The investor will decide at every time $t \in[0, T]$ the amount $\pi_{j, t}$ of wealth to invest in the $j$ th risky asset, $j=1,\ldots,n$. The vector process $\pi:=\left(\pi_{1}, \ldots, \pi_{n}\right)^{\prime}$ is called a portfolio of the investor. Then the investor's
self-financing wealth process $X$ corresponding to a portfolio $\pi$ is the unique strong solution of the SDE:
\begin{equation*}
\left\{\begin{array}{l}
dX_{t}=\left(r_{t}X_{t}+\pi_{t}^{\prime}\mu_{t}\right)dt+\pi_{t}^{\prime}\sigma_{t}dW_{t},\\
X_{0}=x.
\end{array}\right.
\end{equation*}
We assume the following:\\
\textbf{Assumption (B).}
\begin{equation}\label{assumption}
r\in L_{\mathbb{F},\mathbb{P}}^{2,\mathrm{BMO}}\left(0,T;\mathbb{R}\right),\quad \mu\in L_{\mathbb{F},\mathbb{P}}^{\infty}\left(0,T;\mathbb{R}^{n}\right),\quad \sigma\in L_{\mathbb{F},\mathbb{P}}^{\infty}\left(0,T;\mathbb{R}^{n\times n}\right),
\end{equation}
and there exists a constant $\delta>0$ such that $\sigma \sigma^{\prime}\geqslant\delta I_{n\times n}$ for $t\in[0,T]$.

We consider the following MMV problem and MV problem:
\begin{equation}\label{MMV-2}
\sup_{\pi\in\Pi}\inf_{\eta\in\mathcal{A}} \mathbb{E}^{\mathbb{P}^{\eta}}\left[X_{T}^{\pi}+\frac{1}{2\theta}\left(\Lambda_{T}^{\eta}-1\right)\right],
\end{equation}
and
\begin{equation}\label{MV}
    \underset{\pi \in \Pi}{\operatorname{sup}}\left[\mathbb{E}[X_{T}]-\frac{\theta}{2}\mathrm{Var}(X_{T})\right],
\end{equation}
where $\theta$ is a given positive constant. By the results obtained in the previous sections, we can easily obtain the optimal strategies of the MMV and MV problems respectively and show that the optimal strategies to MMV and MMV problems coincide.
\begin{corollary}
Suppose assumption (B) holds. The pair $(\hat{\eta}, \hat{\pi})$ defined by
$$
\hat{\eta}=-\sigma^{-1} \mu-h^{-1} L
$$
and
$$
\hat{\pi}=\left(h \sigma^{\prime}\right)^{-1}\left(\frac{\Lambda^{\hat{\eta}}}{\theta}\left[\left(\sigma^{-1} \mu+h^{-1} L\right) Y-Z\right]-X^{\hat{\pi}} L\right)
$$
is optimal for the MMV problem \eqref{MMV-2} and $\hat{\pi}$ is also the optimal control of MV problem \eqref{MV}, where $(h, L)$ and $(Y, Z)$ are the unique solutions of following BSDEs, respectively,
\begin{equation}
\left\{\begin{array}{l}
dh_{t}=\left[-r_{t}h_{t}+(\sigma^{-1}_{t}\mu_{t})^{\prime}L_{t}+\frac{L_{t}^{\prime}L_{t}}{h_{t}}\right]dt+L_{t}^{\prime}dW_{t},\\
h_{T}=1,
\end{array}\right.
\end{equation}
and
\begin{equation}
\left\{\begin{array}{l}
dY_{t}=\left[-|\sigma_{t}^{-1}\mu_{t}+h_{t}^{-1}L_{t}|^{2}Y_{t}+2(\sigma_{t}^{-1}\mu_{t}+h^{-1}_{t}L_{t})^{\prime}Z_{t}\right]dt+Z_{t}^{\prime}dW_{t},\\
Y_{T}=1.
\end{array}\right.
\end{equation}
Moreover, the problem \eqref{MMV-2} and \eqref{MV} have the same optimal value
$$
h_{0}x+\frac{1}{2\theta}(Y_{0}-1).
$$
\end{corollary}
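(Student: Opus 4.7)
The plan is to recognize the MMV portfolio selection problem \eqref{MMV-2} as a special case of the robust stochastic control problem \eqref{MMV1} from Section \ref{section 2}, and the MV portfolio problem \eqref{MV} as a special case of \eqref{LQ} from Section \ref{section 4}. The identification is the obvious one: take $A = r$, $B = \mu$, $C \equiv 0$, $D = \sigma$, and let the control $u = \pi$. Under this identification the wealth SDE coincides exactly with \eqref{SDE}.

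First I would check that Assumption (B) implies Assumption (A) under this correspondence. The BMO condition on $r$ supplies the BMO condition on $A$; essential boundedness of $\mu$ and $\sigma$ gives boundedness of $B$ and $D$; the zero process $C \equiv 0$ is trivially bounded; and $\sigma\sigma' \geqslant \delta I_{n\times n}$ becomes $DD' \geqslant \delta I_{n\times n}$. Hence Lemmas \ref{h solvability} and \ref{Y solvability} apply. Substituting $C = 0$ and $D = \sigma$ into BSDE \eqref{h}, the term $(D^{-1}B)'C\, h$ disappears and we recover exactly the first BSDE stated in the corollary; substituting into \eqref{Y} gives the second BSDE verbatim. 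Thus the unique solutions $(h,L)$ and $(Y,Z)$ from Section \ref{section3.1} are precisely those appearing in the corollary's statement.

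Next I would apply Theorem \ref{th-1} to this specialization. The formula \eqref{optimal eta} becomes $\hat{\eta} = -\sigma^{-1}\mu - h^{-1}L$, and the formula \eqref{optimal u} becomes
\[
\hat{\pi} = (h\sigma')^{-1}\!\left(\tfrac{\Lambda^{\hat{\eta}}}{\theta}\bigl[(\sigma^{-1}\mu + h^{-1}L)Y - Z\bigr] - X^{\hat{\pi}}L - h X^{\hat{\pi}} \cdot 0\right),
\]
in which the $hX^{\hat{\pi}}C$ term vanishes, matching the corollary exactly. Theorem \ref{th-1} then delivers optimality of $(\hat{\eta},\hat{\pi})$ for \eqref{MMV-2} together with the optimal value $h_0 x + \tfrac{1}{2\theta}(Y_0 - 1)$. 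Finally, the theorem of Section \ref{section 4} applied to the same specialization shows that the very same $\hat{\pi}$ is optimal for the MV problem \eqref{MV} and that its optimal value coincides with that of the MMV problem.

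There is no real obstacle here; the corollary is a direct consequence of the main theorems of Sections \ref{section 3} and \ref{section 4} once the coefficient dictionary is in place. The only point requiring brief care is the verification that Assumption (B) yields Assumption (A) and that the specialized BSDEs reduce to the stated form, both of which are routine.
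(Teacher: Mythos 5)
Your proposal is correct and matches the paper's intended argument exactly: the paper states this corollary without a separate proof, relying on precisely the specialization $A=r$, $B=\mu$, $C\equiv 0$, $D=\sigma$, $u=\pi$ so that Assumption (B) implies Assumption (A) and Theorem \ref{th-1} together with the theorem of Section \ref{section 4} apply directly. Your write-up simply makes explicit the routine verifications the paper leaves implicit.
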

\begin{remark}
    For the MMV portfolio selection problem, our result partially answers the question proposed in \cite{hu2023constrained} for random (and unbounded) interest rate and complete market. For the MV portfolio selection problem, by assuming the appreciation rate and the volatility is bounded, we allow for unbounded random interest rate. This complements the mean-variance portfolio literature (e.g. Shen \cite{shenyang2015}, etc.).
\end{remark}
\subsection{Application to MMV and MV investment-reinsurance problem}

In this subsection, we slightly modify the setup of former subsection. Let $(\Omega,\mathcal{F},\mathbb{P})$ be a probability space on which defined a standard $n$-dimensional Brownian motion. We further assume the Poisson random measure $\bar{\gamma}$ on $[0, T]\times\Omega\times \mathbb{R}_{+}$ is independent with the Brownian motion $W$ under the probability measure $\mathbb{P}$. We denote the intensity $\lambda>0$ and the distribution function $\nu:\mathbb{R}^{+}\rightarrow[0,1]$, and the compensated Poisson random measure can be write as
\begin{equation*}
\tilde{\gamma}(dt,dy)=\bar{\gamma}(dt,dy)-\lambda\nu(dy)dt.
\end{equation*}
Define the filtration $\mathbb{F}=\{\mathcal{F}_t\}_{t\geq 0}$ as the augmented natural filtration generated by $W$ and $\bar{\gamma}$. Define the filtration $\mathbb{F}^{W}=\{\mathcal{F}^W_t\}_{t\geq 0}$ as the augmented natural filtration generated by $W$. 

Let $\mathcal{B}(\mathbb{R}_{+})$ be the Borel $\sigma$-algebra of $\mathbb{R}_{+}$, $\mathcal{P}$ the $\mathbb{F}$-predictable $\sigma$-algebra on $[0,T]\times\Omega$. We will introduce some further notations. Let $L^{2,\nu}$ be the set of $\mathcal{B}\left(\mathbb{R}_{+}\right)$-measurable functions $\varphi:\mathbb{R}_{+}\rightarrow\mathbb{R}$ such that $\int_{\mathbb{R}_{+}} \varphi(y)^{2}\nu(dy)<\infty$. Let $L_{\mathcal{P}}^{2,\nu}(0,T;\mathbb{R})$ be the set of $\mathcal{P}\otimes\mathcal{B}\left(\mathbb{R}_{+}\right)$-measurable functions $\varphi:[0,T]\times\Omega\times\mathbb{R}_{+}\rightarrow\mathbb{R}$ such that $\mathbb{E}\int_{0}^{T}\int_{\mathbb{R}_{+}}|\varphi|^{2}\lambda\nu(dy)dt<\infty$. Let $L_{\mathcal{P}}^{\infty,\nu}(0,T;\mathbb{R})$ be the set of functions $\varphi\in L_{\mathcal{P}}^{2,\nu}(0,T;\mathbb{R})$ which are essentially bounded w.r.t. $dt \otimes d\mathbb{P}\otimes\nu(dy)$.

We consider the same financial market as in the previous subsection, but we assume instead the following:\\
\textbf{Assumption (C).}
\begin{equation*}
r\in L_{\mathbb{F}^{W},\mathbb{P}}^{2,\mathrm{BMO}}\left(0,T;\mathbb{R}\right),\quad \mu\in L_{\mathbb{F}^{W},\mathbb{P}}^{\infty}\left(0,T;\mathbb{R}^{n}\right),\quad \sigma\in L_{\mathbb{F}^{W},\mathbb{P}}^{\infty}\left(0,T;\mathbb{R}^{n\times n}\right),
\end{equation*}
and there exists a constant $\delta>0$ such that $\sigma \sigma^{\prime}\geqslant\delta I_{n\times n}$ for $t\in[0,T]$.

We assume that the investor is allowed to purchase reinsurances to control to control its exposure to the insurances risk. We now consider a reinsurance strategy $q_{t}$, and then the surplus process $V_{t}$ can be represented as follows:
\begin{equation}
\begin{aligned}
dV_{t}=(bq_{t}+a)dt-q_{t}\int_{\mathbb{R}^{+}}y\tilde{\gamma}(dt,dy)
\end{aligned}
\end{equation}
where $b>0,a$ are constants and more details can be seen in \cite{xu2024}. Then the investor's wealth process is given by
\begin{equation}\label{SDE4}
\left\{\begin{array}{l}
dX^{\pi,q}_{t}=\left(r_{t}X^{\pi,q}_{t}+\pi_{t}^{\prime}\mu_{t}+bq_{t}+a\right)dt+\pi_{t}^{\prime}\sigma_{t}dW_{t}-q_{t}\int_{\mathbb{R}^{+}}y\tilde{\gamma}(dt,dy),\\
X_{0}=x.
\end{array}\right.
\end{equation}
The class of admissible investment-insurance strategies is defined as the set
\begin{align*}
\bar{\Pi}:=\bigg\{(\pi,q)\in L_{\mathbb{F},\mathbb{P}}^{2}\left(0,T;\mathbb{R}^{n+1}\right)\mid q\geqslant 0\bigg\}.
\end{align*}
Under assumption (C), for any $(\pi,q)\in\bar{\Pi}$, similar to Lemma \ref{state space theorem}, one can easily show that $X$ is square-integrable. Denote
\begin{align*}
\bar{\mathcal{A}}:=\bigcup_{\epsilon>0}\bigg\{&(\eta,\psi)\in L_{\mathbb{F},\mathbb{P}}^{2}\left(0,T;\mathbb{R}^{n}\right)\times L_{\mathcal{P}}^{\infty,\nu}(0,T;\mathbb{R})\mid\psi\geqslant-1+\epsilon,\mathbb{E}\left[\left(\Lambda_{T}^{\eta, \psi}\right)^{2}\right]<\infty,\\
&\mathbb{E}_{\tau}\left[\int_{\tau}^{T}\left(\left|\eta_{s}\right|^{2}+\int_{\mathbb{R}_{+}}\left|\psi_{s}(y)\right|^{2}v(dy)\right)ds\right]\leqslant c\\
&\text{ for any stopping time $\tau\leqslant T$ and a constant $c>0$}\bigg\}
\end{align*}
and a probability
\begin{align*}
\left.\frac{d\mathbb{P}^{\eta,\psi}}{d\mathbb{P}}\right|_{\mathcal{F}_{t}}=\Lambda_{t}^{\eta,\psi}.
\end{align*}
where the Dol\'eans-Dade stochastic exponential
$$
\Lambda^{\eta, \psi}:=\mathcal{E}\left(\int_{0}^{t} \eta_{s}^{\prime} \mathrm{d} W_{s}+\int_{0}^{t} \int_{\mathbb{R}_{+}} \psi_{s}(y) \tilde{\gamma}(\mathrm{d} s, \mathrm{~d} y)\right)
$$
is a strictly positive martingale on $[0,T]$.

Except for $W^{\eta}_{t}$ in equation \eqref{Girsanov}, we also define
\begin{equation*}
\tilde{\gamma}^{\psi}(dt,dy)=\tilde{\gamma}(dt,dy)-\lambda\psi_{t}(y)\nu(dy)dt,
\end{equation*}
which is a compensated Poisson random measure under $\mathbb{P}^{\eta,\psi}$. We consider the following MMV and MV investment-reinsurance problems:
\begin{equation}\label{MMV-3}
\sup_{(\pi,q)\in\bar{\Pi}}\inf_{(\eta,\psi)\in\bar{\mathcal{A}}}\mathbb{E}^{\mathbb{P}^{\eta,\psi}}\left[X_{T}^{\pi,q}+\frac{1}{2\theta}\left(\Lambda_{T}^{\eta,\psi}-1\right)\right],
\end{equation}
and
\begin{equation}\label{MV-3}
\underset{(\pi,q)\in\bar{\Pi}}{\operatorname{sup}}\left[\mathbb{E}[X^{\pi,q}_{T}]-\frac{\theta}{2}\mathrm{Var}(X^{\pi,q}_{T})\right],
\end{equation}
where $\theta$ is a given positive constant. Without loss of generality, we may assume $a=0$ in \eqref{SDE4}.

\begin{corollary}
Suppose assumption (C) holds. The pair $(\hat{\eta},\hat{\psi},\hat{\pi},\hat{q})$ defined by
$$
\begin{aligned}
&\hat{\eta}=-\sigma^{-1}\mu-h^{-1}L,\\
&\hat{\psi}=\frac{by}{\lambda\int_{\mathbb{R}^{+}}y^{2}\nu(dy)},\\
&\hat{\pi}=\left(h \sigma^{\prime}\right)^{-1}\left(\frac{\Lambda^{\hat{\eta},\hat{\psi}}}{\theta}\left[\left(\sigma^{-1} \mu+h^{-1} L\right) Y-Z\right]-X^{\hat{\pi},\hat{q}} L\right),\\
&\hat{q}=\frac{b\Lambda^{\hat{\eta},\hat{\psi}}Y}{h\theta\lambda\int_{\mathbb{R}^{+}}y^{2}\nu(dy)}
\end{aligned}
$$
is optimal for the MMV investment-reinsurance problem \eqref{MMV-3} and $(\hat{\pi},\hat{q})$ is also the optimal strategy of MV investment-reinsurance problem \eqref{MV-3}, where $(h, L)$ and $(Y, Z)$ are the unique solutions of following BSDEs, respectively,
\begin{equation}
\left\{\begin{array}{l}
dh_{t}=\left[-r_{t}h_{t}+(\sigma^{-1}_{t}\mu_{t})^{\prime}L_{t}+\frac{L_{t}^{\prime}L_{t}}{h_{t}}\right]dt+L_{t}^{\prime}dW_{t},\\
h_{T}=1,
\end{array}\right.
\end{equation}
and
\begin{equation}\label{Y2}
\left\{\begin{array}{l}
dY_{t}=\left[-|\sigma_{t}^{-1}\mu_{t}+h_{t}^{-1}L_{t}|^{2}Y_{t}-\frac{b^{2}Y_{t}}{\lambda\int_{\mathbb{R}^{+}}y^{2}\nu(dy)}+2(\sigma_{t}^{-1}\mu_{t}+h^{-1}_{t}L_{t})^{\prime}Z_{t}\right]dt+Z_{t}^{\prime}dW_{t},\\
Y_{T}=1.
\end{array}\right.
\end{equation}
Moreover, the problem \eqref{MMV-3} and \eqref{MV-3} have the same optimal value
$$
h_{0}x+\frac{1}{2\theta}(Y_{0}-1).
$$
\end{corollary}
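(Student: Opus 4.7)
The plan is to extend the saddle-point machinery of Theorem~\ref{th-1} to the jump-diffusion setting. First I establish well-posedness of the BSDE \eqref{Y2} for $Y$. The only change from Lemma~\ref{Y solvability} is the bounded, negative linear term $-\tfrac{b^2 Y_t}{\lambda\int_{\mathbb{R}_+}y^2\nu(dy)}$. Carrying this term through the proof of Lemma~\ref{Y solvability}, one simply replaces $|D_s^{-1}B_s + h_s^{-1}L_s|^2$ by $|D_s^{-1}B_s + h_s^{-1}L_s|^2 + \tfrac{b^2}{\lambda\int y^2\nu(dy)}$ in the definitions of $J_t$ and $M_t$; since the extra addend is bounded, all BMO and exponential estimates go through unchanged, yielding existence, uniqueness, boundedness and $Y\geqslant 1$. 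Crucially, because Assumption~(C) makes the coefficients $\mathbb{F}^W$-adapted, both $(h,L)$ and $(Y,Z)$ remain $\mathbb{F}^W$-adapted and hence independent of $\bar\gamma$.

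Next, define $R_t^{(\eta,\psi,\pi,q)} := h_t X_t^{\pi,q} + \tfrac{1}{2\theta}(\Lambda_t^{\eta,\psi} Y_t - 1)$. Since $h,Y,Z$ are $\mathbb{F}^W$-adapted and continuous while $X^{\pi,q}$ and $\Lambda^{\eta,\psi}$ are the only jumping processes, It\^o's formula for semimartingales with jumps produces a decomposition whose $\mathbb{P}^{\eta,\psi}$-drift splits as the continuous-part drift from Theorem~\ref{th-1} (specialized to $C\equiv 0$, $A=r$, $B=\mu$, $D=\sigma$) plus the reinsurance- and jump-related contribution
\begin{equation*}
\frac{\Lambda_t^{\eta,\psi}Y_t\lambda}{2\theta}\int_{\mathbb{R}_+}\psi_t(y)^2\nu(dy) - h_t q_t\lambda\int_{\mathbb{R}_+}y\psi_t(y)\nu(dy) + h_t b q_t - \frac{\Lambda_t^{\eta,\psi}b^2 Y_t}{2\theta\lambda\int_{\mathbb{R}_+} y^2\nu(dy)}.
\end{equation*}
Setting $\psi=\hat\psi=\tfrac{by}{\lambda\int y^2\nu(dy)}$ makes the four terms telescope to zero for every $q$, while setting $q=\hat q=\tfrac{b\Lambda^{\hat\eta,\hat\psi}Y}{h\theta\lambda\int y^2\nu(dy)}$ and completing the square in $\psi$ reduces the expression to the nonnegative quadratic $\tfrac{\Lambda Y\lambda}{2\theta}\int_{\mathbb{R}_+}(\psi-\hat\psi)^2\nu(dy)$. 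The essential cancellation is provided by the added drift $-\tfrac{b^2 Y_t}{\lambda\int y^2\nu(dy)}$ in \eqref{Y2}, which absorbs the residual constant so that the full jump drift vanishes at the saddle.

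Combined with the continuous-part analysis of Theorem~\ref{th-1}, this shows that at $(\hat\pi,\hat q)$ the $\mathbb{P}^{\eta,\psi}$-drift of $R$ equals $\tfrac{\Lambda Y}{2\theta}|\eta-\hat\eta|^2 + \tfrac{\Lambda Y\lambda}{2\theta}\int(\psi-\hat\psi)^2\nu(dy)\geqslant 0$, while at $(\hat\eta,\hat\psi)$ it vanishes identically in $(\pi,q)$. The five identities analogous to \eqref{RT}--\eqref{eta u statement} then follow by localizing with stopping times $\tau_n\uparrow T$ and invoking dominated convergence via bounds parallel to \eqref{local}--\eqref{local-2}, yielding the saddle value $R_0=h_0 x+\tfrac{1}{2\theta}(Y_0-1)$. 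Admissibility of $(\hat\eta,\hat\psi,\hat\pi,\hat q)$ is verified as in Step~1 of Theorem~\ref{th-1}: inserting the feedback formulas into the wealth SDE yields the closed form $X^{\hat\pi,\hat q}_t = (\theta h_0 x + Y_0 - \Lambda^{\hat\eta,\hat\psi}_t Y_t)/(\theta h_t)$, from which square-integrability follows from boundedness of $h,Y$ via a local-martingale and Fatou argument. The MV equivalence is then a direct adaptation of the Section~\ref{section 4} Lagrangian-duality argument; completing the square over $q$ inside $J(K,\gamma)$ introduces precisely the extra drift term in \eqref{Y2}, and the feedback $\hat q$ derived there matches the one above.

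The main obstacle is verifying that, despite the added jump dynamics, the Brownian saddle in $(\eta,\pi)$ decouples cleanly from the jump saddle in $(\psi,q)$ in the $\mathbb{P}^{\eta,\psi}$-drift of $R$. This decoupling hinges on the $\mathbb{F}^W$-adaptedness of $(h,L,Y,Z)$ and the independence of $W$ from $\bar\gamma$, which together ensure that all potential cross terms in It\^o's product rule vanish and the two completion-of-squares problems can be carried out independently.
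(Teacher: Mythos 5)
Your proposal is correct and follows essentially the same route as the paper: the same auxiliary process $R_t^{(\eta,\psi,\pi,q)}=h_tX_t^{\pi,q}+\tfrac{1}{2\theta}(\Lambda_t^{\eta,\psi}Y_t-1)$, the same telescoping/completion-of-squares in the jump drift (your four-term cancellation at $\hat\psi$ and the quadratic $\tfrac{\Lambda Y\lambda}{2\theta}\int(\psi-\hat\psi)^2\nu(dy)$ at $\hat q$ match the paper's ``Firstly'' and ``Secondly'' computations exactly), the same localization and closed-form $X^{\hat\pi,\hat q}$ for admissibility, and the same Lagrangian-duality adaptation for the MV part. Your explicit remark that the decoupling of the Brownian and jump saddles rests on the $\mathbb{F}^W$-adaptedness of $(h,L,Y,Z)$ is a point the paper uses only implicitly, but it does not change the argument.
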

\begin{proof}
To begin with, the solvability of \eqref{Y2} is similar to \eqref{Y}. By It\^o's formula, it is easy to verify
\begin{equation}\label{optimal x-2}
\begin{aligned}
\theta &h_{t}X_{t}^{\hat{\pi},\hat{q}}+\Lambda_{t}^{\hat{\eta},\hat{\psi}}Y_{t}\equiv\theta h_{0}x+Y_{0},\\
i.e.\quad &X^{\hat{\pi},\hat{q}}_{t}=\frac{\theta h_{0}x+Y_{0}-\Lambda^{\hat{\eta},\hat{\psi}}_{t}Y_{t}}{\theta h_{t}},
\end{aligned}
\end{equation}
and we can use the same method in section \ref{section 4} to prove
$$
\mathbb{E}[(\Lambda_{t}^{\hat{\eta},\hat{\psi}})^{2}]<\infty,\quad t\in[0,T].
$$
We define
\begin{align*}
R_{t}^{(\eta,\psi,\pi,q)}:=h_{t}X_{t}^{\pi,q}+\frac{1}{2 \theta}(\Lambda_{t}^{\eta,\psi}Y_{t}-1),\quad t\in[0,T],~(\eta,\psi,\pi,q)\in\bar{\mathcal{A}}\times\bar{\Pi},
\end{align*}
and similar to \eqref{R}, we have
\begin{align*}
dR_{t}^{(\eta,\psi,\pi,q)}=&\left[\frac{\Lambda}{2\theta}\left(Y|\eta|^{2}+2\eta^{\prime}Z+\frac{2\theta}{\Lambda}h\eta^{\prime} \sigma^{\prime} \pi\right)+\frac{\Lambda}{2 \theta}\int_{\mathbb{R}_{+}}\left(Y\psi^{2}-\frac{2\theta}{\Lambda}hqy\psi\right)\lambda\nu(dy)\right.\\
&\quad+\left(L^{\prime}\sigma^{-1}\mu+h^{-1}L^{\prime}L+L^{\prime}\eta\right)X+\pi^{\prime}(h\sigma\mu+\sigma L)+hbq\\
&\left.\quad+\frac{\Lambda}{2\theta}\left(-|\sigma^{-1}\mu+h^{-1}L|^{2}Y-\frac{b^{2}Y}{\lambda\int_{\mathbb{R}^{+}}y^{2}\nu(dy)}+2(\sigma^{-1}\mu+h^{-1}L)^{\prime}Z\right)\right]dt\\
&\quad+(\cdots)dW^{\eta}+\int_{\mathbb{R}_{+}}(\cdots)\tilde{\gamma}^{\psi}(dt,dy).
\end{align*}

Firstly, for an increasing sequence of stopping times $\tau_{n}\uparrow+\infty$, we have
\begin{align*}
\mathbb{E}^{\hat{\eta},\hat{\psi}}\left[R_{T\wedge{\tau_{n}}}^{(\hat{\eta},\hat{\psi},\pi,q)}\right]=&h_{0}x+\frac{1}{2\theta}(Y_{0}-1)+\int_{0}^{T\wedge{\tau_{n}}}\bigg[\frac{\Lambda}{2\theta}\bigg(|\hat{\eta}|^{2}Y-|\sigma^{-1}\mu+h^{-1}L|^{2}Y\bigg)\\
&+\frac{\Lambda}{2\theta}\bigg(2\hat{\eta}^{\prime}Z+2(\sigma^{-1}\mu+h^{-1}L)^{\prime}Z\bigg)+\bigg(h\hat{\eta}^{\prime}\sigma^{\prime} \pi+\pi^{\prime}(h\sigma\mu+\sigma L)\bigg)\\
&+\left(L^{\prime}\sigma^{-1}\mu+h^{-1}L^{\prime}L+L^{\prime}\hat{\eta}\right)X+\frac{\Lambda}{2 \theta}\int_{\mathbb{R}_{+}}\left(\hat{\psi}^{2}Y-\frac{b^{2}Y}{\lambda^{2}\int_{\mathbb{R}^{+}}y^{2}\nu(dy)}\right)\lambda\nu(dy)\\
&+hq\left(b-\int_{\mathbb{R}_{+}}y\hat{\psi}\lambda\nu(dy)\right)\bigg]dt\\
=&h_{0}x+\frac{1}{2\theta}(Y_{0}-1).
\end{align*}
Similar to \eqref{local}, we can prove $\mathbb{E}\left[\sup _{t\in[0,T]}\left|\Lambda_t^{\hat{\eta},\hat{\psi}} R_t^{\hat{\eta},\hat{\psi}, \pi,q}\right|\right]<\infty$. Then we let $n\rightarrow\infty$ and obtain
$$
\mathbb{E}^{\mathbb{P}^{\hat{\eta},\hat{\psi}}}\left[X_{T}^{\pi,q}+\frac{1}{2\theta}\left(\Lambda_{T}^{\hat{\eta},\hat{\psi}}-1\right)\right]=h_{0}x+\frac{1}{2\theta}(Y_{0}-1).
$$

Secondly, similar to \eqref{local-2}, we have
\begin{align*}
\mathbb{E}^{\eta,\psi}\left[R_{T\wedge{\tau_{n}}}^{(\eta,\psi,\hat{\pi},\hat{q})}\right]=&h_{0}x+\frac{1}{2\theta}(Y_{0}-1)+\mathbb{E}^{\mathbb{P}^{\eta,\psi}}\int_{0}^{T\wedge\tau_{n}}\frac{\Lambda}{2\theta}\bigg[Y\left|\eta+\frac{1}{Y}\left(Z+\frac{\theta (h\sigma^{\prime}\hat{\pi}+\hat{X}L)}{\Lambda}\right)\right|^{2}\\
&\qquad+Y\int_{\mathbb{R}^{+}}\left|\psi-\frac{\theta h\hat{q}y}{\Lambda Y}\right|^{2}\lambda\nu(dy)\bigg]dt\\
\geqslant&h_{0}x+\frac{1}{2\theta}(Y_{0}-1).
\end{align*}
We can also prove $\mathbb{E}\left[\sup _{t\in[0,T]}\left|\Lambda_t^{\eta,\psi} R_t^{\eta,\psi, \hat{\pi},\hat{q}}\right|\right]<\infty$ and then $\mathbb{E}^{\eta,\psi}\left[R_{T}^{(\eta,\psi,\hat{\pi},\hat{q})}\right]\geqslant h_{0}x+\frac{1}{2\theta}(Y_{0}-1)$.

Thirdly, it is obvious that
$$
\mathbb{E}^{\mathbb{P}^{\hat{\eta},\hat{\psi}}}\left[X_{T}^{\hat{\pi},\hat{q}}+\frac{1}{2\theta}\left(\Lambda_{T}^{\hat{\eta},\hat{\psi}}-1\right)\right]=h_{0}x+\frac{1}{2\theta}(Y_{0}-1).
$$

At last, we recall the notation in \eqref{F(K)}-\eqref{F(K)2} and
similar to \eqref{XT-gamma2}, we have
\begin{equation*}
\begin{aligned}
&\mathbb{E}[(X_{T}^{\pi,q}-\gamma)^{2}]\\
&=Y_{0}^{-1}(h_{0}x-\gamma)^{2}+\mathbb{E}\bigg[\int_{0}^{T}\bigg\{Y_{t}^{-1}\left|h_{t}\sigma_{t}^{\prime}\pi_{t}+X_{t}L_{t}+\left(\phi_{t}-\frac{Z_{t}}{Y_{t}}\right)(h_{t}X_{t}^{\pi,q}-\gamma)\right|^{2}\\
&\qquad+Y_{t}^{-1}\int_{\mathbb{R}^{+}}y^{2}\lambda\nu(dy)\left|h_{t}q_{t}+\frac{(h_{t}X_{t}-\gamma)b}{\int_{\mathbb{R}^{+}}y^{2}\lambda\nu(dy)}\right|^{2}\bigg].
\end{aligned}
\end{equation*}
Therefore the optimal feedback strategy is
\begin{align*}
&h_{t}\sigma_{t}^{\prime} \hat{\pi}_{t}^{\gamma(K)}+X_{t}^{\hat{\pi}^{\gamma(K)},\hat{q}^{\gamma(K)}}L_{t}=-\left(\phi_{t}-\frac{Z_{t}}{Y_{t}}\right)\left(h_{t}X_{t}^{\hat{\pi}^{\gamma(K)},\hat{q}^{\gamma(K)}}-\gamma(K)\right),\\
&h_{t}\hat{q}_{t}=-\frac{\left(h_{t}X_{t}^{\hat{\pi}^{\gamma(K)},\hat{q}^{\gamma(K)}}-\gamma(K)\right)b}{\int_{\mathbb{R}^{+}}y^{2}\lambda\nu(dy)},
\end{align*}
and furthermore
\begin{align*}
\hat{\gamma}(K) = \frac{Y_{0}^{-1}h_{0}x-K}{Y_{0}^{-1}-1},\quad\hat{K}= h_{0}x+\frac{1-Y_{0}^{-1}}{\theta Y_{0}^{-1}},
\end{align*}
and
\begin{equation*}
\underset{(\pi,q)\in\bar{\Pi}}{\operatorname{sup}}\left[\mathbb{E}[X_{T}]-\frac{\theta}{2}\mathrm{Var}(X_{T})\right]=h_{0}x+\frac{1}{2\theta}(Y_{0}-1).
\end{equation*}
From \eqref{optimal x-2}, we can verify that
\begin{align*}
\hat{\pi}_{t}
& = (h_{t}\sigma^{\prime}_{t})^{-1}\left[-(\phi_{t}-\frac{Z_{t}}{Y_{t}})\left(h_{t}X_{t}^{\hat{\pi},\hat{q}}-h_{0}x-\frac{Y_{0}}{\theta}\right)-X_{t}^{\hat{\pi},\hat{q}}L_{t}\right]\\
& = (h_{t}\sigma_{t}^{\prime})^{-1}\left[(\phi_{t}Y_{t}-Z_{t})\left(\frac{\theta h_{0}x+Y_{0}-\theta h_{t}X_{t}^{\hat{u}}}{\theta Y_{t}}\right)-X_{t}^{\hat{\pi},\hat{q}}L_{t}\right]\\
& = (h_{t}\sigma_{t}^{\prime})^{-1}\left[\frac{\Lambda^{\hat{\eta},\hat{\psi}}}{\theta}\big(\phi_{t}Y_{t}-Z_{t}\big)-X_{t}^{\hat{\pi},\hat{q}}L_{t}\right]
\end{align*}
and
\begin{align*}
\hat{q}_{t}
&=-\frac{\left(h_{t}X_{t}^{\hat{\pi},\hat{q}}-h_{0}x-\frac{Y_{0}}{\theta}\right)b}{h_{t}\int_{\mathbb{R}^{+}}y^{2}\lambda\nu(dy)}=\frac{\left(\theta h_{0}x+Y_{0}-\theta h_{t}X_{t}^{\hat{\pi},\hat{q}}\right)b}{\theta h_{t}\int_{\mathbb{R}^{+}}y^{2}\lambda\nu(dy)}=\frac{\Lambda^{\hat{\eta},\hat{\psi}}Y_{t}b}{\theta h_{t}\int_{\mathbb{R}^{+}}y^{2}\lambda\nu(dy)}.
\end{align*}
\end{proof}
\begin{remark}
    Under the Cram\'{e}r-Lundberg model, constrained mean-variance and monotone mean-variance investment-reinsurance problems with random coefficients have been studied in \cite{xu2024a} and \cite{xu2024} respectively. Without trading constraints and under the restriction that $r,\mu,\sigma$ are adapted with respect to $\mathbb{F}^{W}$, our results complement their results by further allowing the interest rate to be random and unbounded. The general case is left for future research.
\end{remark}

\bibliographystyle{siam}
\bibliography{bib}

\begin{thebibliography}{10}

\bibitem{DelbaenandTang2010}
{\sc F.~Delbaen and S.~Tang}, {\em Harmonic analysis of stochastic equations and backward stochastic differential equations}, Probab. Theory Related Fields, 146 (2010), pp.~291--336.

\bibitem{du2023monotone}
{\sc J.~Du and M.~S. Strub}, {\em Monotone and classical mean-variance preferences coincide when asset prices are continuous}, in Monotone and Classical Mean-Variance Preferences Coincide When Asset Prices are Continuous: Du, Jinye| uStrub, Moris Simon, [Sl]: SSRN, 2023.

\bibitem{Huetal.2022}
{\sc Y.~Hu, X.~Shi, and Z.~Q. Xu}, {\em Constrained stochastic {LQ} control with regime switching and application to portfolio selection}, Ann. Appl. Probab., 32 (2022), pp.~426--460.

\bibitem{hu2023constrained}
{\sc Y.~Hu, X.~Shi, and Z.~Q. Xu}, {\em Constrained monotone mean-variance problem with random coefficients}, SIAM J. Financial Math., 14 (2023), pp.~838--854.

\bibitem{hu-nonhomogeneous-2023}
{\sc Y.~Hu, X.~Shi, and Z.~Q. Xu}, {\em Non-homogeneous stochastic {LQ} control with regime switching and random coefficients}, Math. Control Relat. Fields, 14 (2024), pp.~671--694.

\bibitem{hu2005constrained}
{\sc Y.~Hu and X.~Y. Zhou}, {\em Constrained stochastic {LQ} control with random coefficients, and application to portfolio selection}, SIAM J. Control Optim., 44 (2005), pp.~444--466.

\bibitem{BMO}
{\sc N.~Kazamaki}, {\em Continuous exponential martingales and {BMO}}, vol.~1579 of Lecture Notes in Mathematics, Springer-Verlag, Berlin, 1994.

\bibitem{leippold2004geometric}
{\sc M.~Leippold, F.~Trojani, and P.~Vanini}, {\em A geometric approach to multiperiod mean variance optimization of assets and liabilities}, J. Econom. Dynam. Control, 28 (2004), pp.~1079--1113.

\bibitem{li2022comparison}
{\sc Y.~Li, Z.~Liang, and S.~Pang}, {\em Comparison between mean-variance and monotone mean-variance preferences under jump diffusion and stochastic factor model}, arXiv preprint arXiv:2211.14473,  (2022).

\bibitem{liang2008optioned}
{\sc J.~Liang, S.~Zhang, and D.~Li}, {\em Optioned portfolio selection: models and analysis}, Math. Finance, 18 (2008), pp.~569--593.

\bibitem{MR0238472}
{\sc D.~G. Luenberger}, {\em Optimization by vector space methods}, John Wiley \& Sons, Inc., New York-London-Sydney, 1969.

\bibitem{maccheroni2009portfolio}
{\sc F.~Maccheroni, M.~Marinacci, A.~Rustichini, and M.~Taboga}, {\em Portfolio selection with monotone mean-variance preferences}, Math. Finance, 19 (2009), pp.~487--521.

\bibitem{markowits1952portfolio}
{\sc H.~M. Markowitz}, {\em Portfolio selection: {E}fficient diversification of investments}, vol.~Monograph 16 of Cowles Foundation for Research in Economics at Yale University, John Wiley \& Sons, Inc., New York; Chapman \& Hall, Ltd., London, 1959.

\bibitem{shenyang2015}
{\sc Y.~Shen}, {\em Mean-variance portfolio selection in a complete market with unbounded random coefficients}, Automatica J. IFAC, 55 (2015), pp.~165--175.

\bibitem{shen2022cone}
{\sc Y.~Shen and B.~Zou}, {\em Short communication: cone-constrained monotone mean-variance portfolio selection under diffusion models}, SIAM J. Financial Math., 13 (2022), pp.~SC99--SC112.

\bibitem{xu2024a}
{\sc X.~Shi and Z.~Q. Xu}, {\em Constrained mean-variance investment-reinsurance under the {C}ram\'er-{L}undberg model with random coefficients}, ESAIM: Control, Optimisation and Calculus of Variations, to appear.

\bibitem{xu2024}
{\sc X.~Shi and Z.~Q. Xu}, {\em Constrained monotone mean-variance investment-reinsurance under the {C}ram\'er-{L}undberg model with random coefficients}, Systems Control Lett., 188 (2024), ~Paper No. 105796.

\bibitem{strub2020note}
{\sc M.~S. Strub and D.~Li}, {\em A note on monotone mean-variance preferences for continuous processes}, Oper. Res. Lett., 48 (2020), pp.~0--3.

\bibitem{SunandYong2014}
{\sc J.~Sun and J.~Yong}, {\em Linear quadratic stochastic differential games: open-loop and closed-loop saddle points}, SIAM J. Control Optim., 52 (2014), pp.~4082--4121.

\bibitem{trybula2019continuous}
{\sc J.~Trybu\l~a and D.~Zawisza}, {\em Continuous-time portfolio choice under monotone mean-variance preferences---stochastic factor case}, Math. Oper. Res., 44 (2019), pp.~966--987.

\bibitem{zhou2000continuous}
{\sc X.~Y. Zhou and D.~Li}, {\em Continuous-time mean-variance portfolio selection: a stochastic {LQ} framework}, Appl. Math. Optim., 42 (2000), pp.~19--33.

\bibitem{zhou2003markowitz}
{\sc X.~Y. Zhou and G.~Yin}, {\em Markowitz's mean-variance portfolio selection with regime switching: a continuous-time model}, SIAM J. Control Optim., 42 (2003), pp.~1466--1482.

\bibitem{zhu2004risk}
{\sc S.-S. Zhu, D.~Li, and S.-Y. Wang}, {\em Risk control over bankruptcy in dynamic portfolio selection: a generalized mean-variance formulation}, IEEE Trans. Automat. Control, 49 (2004), pp.~447--457.

\end{thebibliography}

\end{document}